\documentclass[12pt]{amsart}
\usepackage{amsmath,amssymb,amsfonts,amscd}

\usepackage{tikz}
\usetikzlibrary{arrows}
\usepackage{tikz-cd}
\usetikzlibrary{cd}

\newcommand{\lf}{\lambda_3}

\usepackage{array,multirow}
\usepackage{color}

\usepackage{hyperref}
\hypersetup{
    colorlinks = true,
    linkcolor = {black},
   citecolor  = {black},
   urlcolor = {black}
}

\usepackage{lmodern}
\usepackage{amsthm}
\usepackage{amssymb}
\usepackage{tikz}
\usetikzlibrary{decorations.pathreplacing}
\usetikzlibrary{patterns}

\newcommand{\ra}{\rightarrow}

\newcommand{\Q}{\mathbb{Q}}

\newcommand{\Z}{\mathbb{Z}}

\headheight=7pt \topmargin=14pt \textheight=584pt \textwidth=432pt
\oddsidemargin=18pt \evensidemargin=18pt \flushbottom
\setlength{\parindent}{0pt}

\setlength{\parskip}{6pt plus3pt minus3pt}

\newtheorem{theorem}[subsection]{Theorem}
\newtheorem{lemma}{Lemma}[section]

\newtheorem{proposition}[lemma]{Proposition}

\theoremstyle{definition}
\newtheorem{definition}[lemma]{Definition}

\theoremstyle{remark}
\newtheorem{remark}[lemma]{Remark}
\numberwithin{equation}{section}
\theoremstyle{plain}

\theoremstyle{definition}

\usepackage{enumitem}

\theoremstyle{definition}

\newtheorem*{defin*}{Definition}

\theoremstyle{plain}

\begin{document}

\title[A triple torsion linking form and 3-manifolds in ${S^4}$]{A triple torsion linking form and 3-manifolds in $\mathbf{S^4}$}

\author{Michael Freedman}
\address{\hskip-\parindent
  Michael Freedman, 
  LogiQAI Solutions and 
CMSA, Harvard University, Cambridge, MA 02138
}
\email{mikehartleyfreedman@outlook.com}

\author{Vyacheslav Krushkal}
\address{\hskip-\parindent
  Slava Krushkal,
    Department of Mathematics\\
    University of Virginia\\
    Charlottesville, VA 22904}
\email{krushkal@virginia.edu}

\begin{abstract} Given a rational homology 3-sphere $M$, we introduce a triple linking form on $H_1(M; \Z)$, defined when the classical torsion linking pairing of three homology classes vanishes pairwise. If $M$ is the boundary of a simply-connected $4$-manifold $N$, the triple linking form can be computed in terms of the higher order intersection form on $N$, introduced by Matsumoto. We use these methods to formulate an embedding obstruction for rational homology spheres in $S^4$, extending a 1938 theorem of Hantzsche.
\end{abstract}

\maketitle

\section{Introduction}
The non-singular torsion linking pairing \[ \lambda\colon \tau H_k(M;\Z) \otimes \tau H_{n-k-1}(M;\Z) \to \Q/\Z, \]
where $M$ is a closed orientable $n$-manifold and $\tau$ denotes the torsion subgroup, is part of classical homology theory developed in the early 20th century \cite{Seifert, ST}.

Higher order versions of some of the key homological invariants were developed in the 1950s -- 1970s.
The Massey products \cite{UM, Massey} are higher order cohomological operations, defined when the cup product vanishes. More specific to low-dimensional topology, Milnor's invariants \cite{Milnor} are nilpotent invariants of links in 3-space generalizing the linking number, and the Matsumoto triple product \cite{Matsumoto} is defined on  homotopy classes of 2-spheres in 4-manifolds when pairwise intersection numbers are trivial. In fact, all three of these higher order invariants are closely related for a particular class of examples: 4-manifolds $N_L$ obtained from the 4-ball by attaching 0-framed 2-handles along a link $L$ with trivial linking numbers \cite{Turaev, Matsumoto}. 

We introduce a triple torsion linking form $\lf$ for rational homology 3-spheres. It is a $\Q/\Z$-valued invariant defined on three classes in $H_1(M^3;\Z)$ when the classical torsion linking pairing vanishes pairwise.\footnote{More precisely, $\lf(a,b,c)$ is defined for $a,b,c\in H_1(M^3;\Z)$ such that $\lambda(a,b)=\lambda(a,c)=0$.}
Our definition of $\lf$ is geometric, formulated in terms of intersections of certain 2-complexes, {\em rational gropes}, which can be found as a consequence of the vanishing of the classical torsion linking pairing, see Section~\ref{sec: triple linking}.

We show that $\lambda_3$ is an obstruction to embedding rational homology 3-spheres in $S^4$.
Recall the following theorem proved by Hantzsche in 1938 \cite{Hantzsche}, see  \cite{Freedman} for more details.

\begin{theorem}[Hantzsche]\label{theorem hantzsche}
	If an orientable closed 3-manifold $M^3$ embeds in $S^4$ there is a splitting $\tau H_1(M;\Z) \cong A \oplus B$ with $\lambda$ vanishing identically on both $A$ and $B$. Since $\lambda$ is non-singular, this gives natural isomorphisms $A \cong \operatorname{Hom}(B, \Q\slash\Z)$ and $B \cong \operatorname{Hom}(A,\Q\slash\Z)$. In particular, $|{\tau H_1(M;\Z)}|$ is a square integer. 
\end{theorem}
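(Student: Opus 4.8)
The plan is to prove Hantzsche's theorem by finding a Lagrangian-type splitting of the torsion linking form using a 4-manifold that $M$ bounds in $S^4$. Suppose $M \subset S^4$ separates $S^4$ into two pieces $W_1$ and $W_2$ with $M = \partial W_1 = \partial W_2$ (using $M$ orientable and closed in $S^4$, together with Alexander duality to get the separation). First I would set $A = \ker(\tau H_1(M;\Z) \to H_1(W_1;\Z))$ and $B = \ker(\tau H_1(M;\Z) \to H_1(W_2;\Z))$, the subgroups dying in each side. The heart of the argument is to show that $\lambda$ vanishes identically on each of $A$ and $B$, and that these two subgroups together give the desired direct-sum splitting with the adjoint isomorphisms.

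Next I would establish the vanishing of $\lambda|_A$ geometrically. Given $a_1, a_2 \in A$, represent them by 1-cycles in $M$; since each is torsion and bounds in $W_1$, suitable multiples bound 2-chains pushed into $W_1$. The torsion linking number $\lambda(a_1,a_2)$ is computed by intersecting such a bounding 2-chain for $a_1$ with (a rational multiple of) $a_2$ and reducing mod $\Z$; by pushing both bounding surfaces into the \emph{same} side $W_1$ and using that $a_2$ itself caps off in $W_1$, one arranges the intersection to be an integer, so $\lambda(a_1,a_2) = 0$ in $\Q/\Z$. The symmetric argument on the other side gives $\lambda|_B \equiv 0$. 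This is the standard ``half-lives-half-dies'' phenomenon, and the main technical point is keeping careful track of the rational coefficients and the reduction mod $\Z$ when passing between chains in $M$ and chains in $W_i$.

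The hard part will be proving that $A$ and $B$ are \emph{complementary}, i.e.\ that $\tau H_1(M;\Z) \cong A \oplus B$. For this I would compare the orders: a half-lives-half-dies argument (e.g.\ via the long exact sequences of the pairs $(W_i, M)$ together with Poincar\'e--Lefschetz duality) shows $|A|^2 \geq |\tau H_1(M;\Z)|$ and likewise for $B$, while non-singularity of $\lambda$ forces any subgroup on which $\lambda$ vanishes to have order at most $\sqrt{|\tau H_1(M;\Z)|}$, so $|A| = |B| = \sqrt{|\tau H_1(M;\Z)|}$. Combined with $A \cap B = 0$ (a class dying on both sides bounds in all of $S^4$ and hence is null-homologous in $M$, using that $S^4$ has trivial homology), the order count yields the direct-sum decomposition and forces $|\tau H_1(M;\Z)|$ to be a perfect square. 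The adjoint isomorphisms $A \cong \operatorname{Hom}(B,\Q/\Z)$ and $B \cong \operatorname{Hom}(A,\Q/\Z)$ then follow formally: non-singularity of $\lambda$ identifies $\tau H_1(M;\Z)$ with its own $\Q/\Z$-dual, and since $\lambda$ annihilates $A$, the pairing descends to a perfect pairing between $A$ and $B$.

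I expect the subtlest obstacle to be the coefficient bookkeeping in the vanishing step rather than the final algebra: one must ensure that the 2-chains representing the rational nullhomologies stay on a single side of $M$ and that the self-intersection contributions are genuinely integral, which is where the geometry of the embedding (as opposed to an abstract cobordism) is essential.
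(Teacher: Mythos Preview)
The paper does not actually prove Hantzsche's theorem: it is quoted as a classical result with references to \cite{Hantzsche} and \cite{Freedman}, and the only additional content is the sentence identifying $A$ and $B$ as the kernels of the maps on $H_1$ induced by the inclusions of $M$ into the two complementary regions of $S^4\setminus M$. Your definition of $A$ and $B$ agrees with this, so there is no discrepancy to report on that front.

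Since there is no proof in the paper to compare against, a brief comment on your outline: the vanishing step is essentially the same geometric picture that the paper later packages as Lemma~\ref{lem: linking intersection} (pushing $\Sigma_x$ into the bounding $4$-manifold and counting intersections there), so that part is in good shape. Where your sketch takes a detour is in establishing complementarity of $A$ and $B$. The order-counting via ``half-lives-half-dies'' is delicate for torsion and your inequality $|A|^2 \ge |\tau H_1(M)|$ would need justification; the cleaner route is the Mayer--Vietoris sequence for $S^4 = W_1 \cup_M W_2$. Since $H_1(S^4)=H_2(S^4)=0$, the map $H_1(M)\to H_1(W_1)\oplus H_1(W_2)$ is an isomorphism, and restricting to torsion gives $\tau H_1(M)=A\oplus B$ in one stroke. (This also makes precise your argument for $A\cap B=0$: ``bounds on both sides, hence null in $M$'' is exactly the statement that the connecting map $H_2(S^4)\to H_1(M)$ is zero.) The duality $A\cong\operatorname{Hom}(B,\Q/\Z)$ then follows from non-singularity of $\lambda$ exactly as you say.
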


In this theorem, $A$ and $B$ are the kernels of the maps on $H_1$ induced by the inclusions of $M$ into the components of the complement of $M$ in $S^4$. The theorem may be restated as saying that these two kernels form a pair of {\em dual Lagrangians} with respect to the torsion linking pairing. 

We show that the triple torsion linking form gives a higher order embedding obstruction:

\begin{theorem} \label{thm: vanishing} 
 Suppose $M$ is a rational homology 3-sphere embedded in $S^4$. Then $\lf$ vanishes on each of the two dual Lagrangians $A,B$ in Hantzsche's theorem. 
\end{theorem}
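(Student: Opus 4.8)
The plan is to compute $\lf$ on $A$ (and symmetrically on $B$) by exploiting the four-dimensional fillings provided by the embedding. Writing $S^4 = N_A \cup_M N_B$ for the two closed complementary regions, with $A = \ker(H_1(M;\Z) \to H_1(N_A;\Z))$ and $B = \ker(H_1(M;\Z) \to H_1(N_B;\Z))$, every class $a \in A$ is represented by a curve in $M$ that bounds a $2$-chain in $N_A$. Since $A$ is a Lagrangian for $\lambda$, for any $a,b,c \in A$ we have $\lambda(a,b) = \lambda(a,c) = 0$ automatically, so $\lf(a,b,c)$ is defined, and it suffices to show it equals $0 \in \Q/\Z$.

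First I would recall the geometric definition of $\lf$ from Section~\ref{sec: triple linking}, in which $\lf(a,b,c)$ is obtained as a signed, rationally weighted count of intersections among the stages of rational gropes witnessing the vanishing of $\lambda(a,b)$ and $\lambda(a,c)$. The idea is to build these gropes not in a collar of $M$ but pushed into the filling $N_A$: because $a,b,c$ die in $H_1(N_A)$, the first-stage surfaces can be taken to be honest rational $2$-chains in $N_A$, and the higher stages can be constructed inductively inside $N_A$ as well. This realizes the computation of $\lf(a,b,c)$ as an instance of Matsumoto's higher order intersection form on the $4$-manifold $N_A$ bounded by $M$, using the relationship between $\lf$ and the Matsumoto triple product established earlier in the paper.

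With the entire configuration contained in $N_A$, the count defining $\lf(a,b,c)$ becomes a triple-intersection quantity among $2$-dimensional objects and their bounding membranes inside a single $4$-manifold. I would then argue that this quantity vanishes by a dimension and Poincar\'e--Lefschetz duality argument: three $2$-chains in a $4$-manifold are generically disjoint, and the secondary intersections produced by the Whitney/membrane construction pair trivially once all of $a,b,c$ bound on the same side. This is precisely the higher-order analogue of Hantzsche's mechanism, in which the existence of bounding $2$-chains in $N_A$ forces $A$ to be isotropic for the ordinary pairing $\lambda$; here the same one-sidedness forces the secondary triple invariant to vanish. The case of $B$ is identical with $N_B$ in place of $N_A$.

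The main obstacle will be the bookkeeping that ties together three features of the situation at once. First, the classes are torsion, so the grope construction is only rational: one must pass to integer multiples $n\cdot a$, track the denominators through every stage, and check that the resulting count is well defined in $\Q/\Z$ and independent of the choices of bounding chains. Second, the complementary regions $N_A,N_B$ need not be simply connected, so the cited computation of $\lf$ via the Matsumoto form --- stated for a simply-connected filling --- must be adapted; I expect this to require either killing $\pi_1(N_A)$ by surgery without disturbing the relevant rational homology, or verifying directly that the triple product depends only on the homological data that survives abelianization. Third, one must confirm that the intrinsic invariant $\lf(a,b,c)$ defined in $M$ genuinely agrees with the extrinsic count performed in $N_A$, that is, that pushing the gropes off $M$ into the filling does not change the answer. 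Establishing this compatibility, rather than the final vanishing itself, is where the real work lies.
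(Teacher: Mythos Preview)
Your overall strategy—push the rational-grope construction into $N_A$ and identify $\lf$ with a Matsumoto-type quantity there—matches the paper's first step (its Lemma~\ref{lem: triple linking Matsumoto}). The gap is in the vanishing step.

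The sentence ``three $2$-chains in a $4$-manifold are generically disjoint, and the secondary intersections \dots\ pair trivially once all of $a,b,c$ bound on the same side'' is not an argument. Disjointness of the bodies $X,Y,Z$ is exactly the vanishing of the \emph{primary} pairing; the secondary quantity counts intersections of the second-stage surfaces with the other bodies, and those are $2$-dimensional objects in a $4$-manifold, meeting generically in points, not in the empty set. Nothing about ``bounding on the same side'' forces these counts to be zero or to be divisible by $t$; indeed, working entirely inside $N_A$ you recover $\lf$, not zero. No Poincar\'e--Lefschetz duality statement in $N_A$ alone will kill a secondary invariant of this kind.

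The mechanism you are missing is that the paper does \emph{not} stay in $N_A$. After establishing $\lf(x,y,z)=\tfrac{1}{t^3}\langle X,Y,Z\rangle_{\Q}$ with second-stage surfaces in $N_A$ bounding $t\gamma_i,t\delta_i$, it observes that the symplectic basis curves $\gamma_i,\delta_i$ themselves bound surfaces $C'_i,D'_i$ in $S^4$, since $H_1(S^4)=0$. Replacing $C_i,D_i$ by $tC'_i,tD'_i$ leaves each factor in \eqref{eq: rational Matsumoto x} unchanged (the bodies are closed cycles), but now the right-hand side is $t^2$ times the genuine Matsumoto product $\langle[X],[Y],[Z]\rangle$ in $S^4$, which vanishes because $H_2(S^4)=0$. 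So the zero comes from the extra factor of $t^2$ gained by letting the second stages spill over into $N_B$, combined with the triviality of the ambient $S^4$—not from any internal feature of $N_A$.

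Your worry about simple connectivity of $N_A$ is legitimate for the formulation in Section~\ref{sec: relation Matsumoto}, but the paper's proof of the theorem sidesteps it: the rational Matsumoto expression is built by hand in $N_A$ (no $\pi_1$ hypothesis needed), and the comparison step takes place in $S^4$, which is simply connected.
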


The proof of Theorem \ref{thm: vanishing} is contained in Section \ref{sec: vanishing}. We give an example of a 3-manifold whose embedding in $S^4$ is obstructed by the new invariant $\lambda_3$ but not by Hantzsche's theorem.  This construction relies on the analysis of an interesting computational problem involving dual pairs of Lagrangians, see Section \ref{sec: examples}.

{\em Remark.} Both theorems \ref{theorem hantzsche}, \ref{thm: vanishing} hold for an integer homology 4-sphere in place of~$S^4$.

We relate $\lf$ to the Matsumoto triple product on a bounding 4-manifold in Section \ref{sec: relation Matsumoto}, extending the relation between torsion linking pairing and the intersection form recalled in Section \ref{sec: linking intersercion}. As a consequence, it is shown in Proposition \ref{prop: symmetry} that the triple torsion linking form is antisymmetric with respect to the action of the symmetric group $S_3$.

It is interesting to note that classical higher order invariants, discussed earlier in this introduction, are obstructions to embedding 4-manifolds $N_L=D^4\cup_{L, \, {\rm zero}\; {\rm framing}}$2-handles into $S^4$. This is easily seen from the fact that Milnor's invariants are concordance invariants of links, or alternatively using properties of the Matsumoto triple product. In general the boundary of a 4-manifold $N$ may embed in $S^4$ even when $N$ does not embed. In this sense Theorem \ref{thm: vanishing}  shows that our higher order invariant $\lf$ is a ``stronger'' obstruction to the 3-manifold embedding problem.

The work in this paper was motivated by the results of the first named author \cite{Freedman} which formulated the embedding problem for closed 3-manifolds in $S^4$ in terms of a criterion on Heegaard splittings. Our results do not directly contribute to the main motivation of \cite{Freedman}: distinguishing $S^4$ from a potential homotopy 4-sphere, since the triple linking pairing is an obstruction to embedding in a homology 4-sphere. Nevertheless, there is an interesting open question relating $\lambda_3$ to Heegaard splittings, see Section \ref{lf Heegaard}.

{\bf Convention.} All homology and cohomology groups will be taken with $\Z$ coefficients. The results of this paper apply in both smooth and topological categories.

{\em Acknowledgements.} 
We would like to thank Peter Teichner for his detailed feedback on this work, and in particular for his correction to the statement of Theorem \ref{thm: vanishing} and to the verification that the example in Section \ref{sec: examples} realizes our obstruction.
We also would like to thank Ryan Budney for his comments.

VK was supported in part by NSF grant DMS-2405044.

\section{The torsion linking pairing and the intersection pairing} \label{sec: linking intersercion}

For closed orientable 3-manifolds, it follows from Poincar\'{e} duality that
$H_1(M;\Z) \cong H^2(M;\Z)$,  
and $H^2$ fits in a universal coefficient sequence:
\begin{equation}\label{eq:iso-2}
	0 \ra \rm{Ext}_\Z(H_1(M;\Z),\Z) \ra H^2(M;\Z) \ra \rm{Hom}(H_2(M;\Z),\Z) \ra 0,
\end{equation}
where $\rm{Ext}_\Z(H_1(M;\Z),\Z)$ is naturally isomorphic to $\rm{Hom}_\Z(\tau H_1(M;\Z),  \, \Q/\Z).$
Combining these facts, there is a natural isomorphism $\tau H_1(M;\Z) \cong \operatorname{Hom}(\tau H_1(M),\Q \slash \Z)$, which may be reformulated as the nonsingular linking form:
\begin{equation}
	\tau H_1(M;\Z) \otimes \tau H_1(M;\Z) \xrightarrow{\lambda} \Q \slash \Z
\end{equation}

Geometrically, $\lambda$ may be computed as follows. Fix $t\in\Z\setminus 0$ such that $ta=0$ for all $a\in H_1(M)$. Given $[x],[y]\in H_1(M;\Z)$, let $\Sigma$ be a map of an oriented surface into $M$ with $\partial M=tx$. Then
\[ \lambda([x], [y])\, =\, \frac{1}{t}\, \Sigma\cdot y.\]

The facts that $\lambda$ is symmetric and independent of $t$ follow from the algebraic definition above.

The following method applies to any closed 3-manifold, so we state it in full generality, although we will use it only in the setting of rational homology spheres. Let $N$ be a 4-manifold with $H_1(N)=0$ and $\partial N=M$. Consider two elements $[x],[y]\in\tau H_1(M)$. Since $H_1 (N)=0$, there exist oriented surfaces $S_x, S_y$ in $N$ with $\partial S_x=x, \partial S_y=y$. As above, fix $t\in\Z\setminus 0$ such that $ta=0$ for all $a\in\tau H_1(M)$ and consider $\Sigma_x, \Sigma_y\subset M$ with $tx=\partial \Sigma_x, \, ty = \partial \Sigma_y$. 
Consider closed, oriented surfaces $X, Y$ in $N$:
\[ X:= \Sigma_x-tS_x, \, Y:= \Sigma_y-tS_y.
\]
The following relation between the torsion linking pairing in $M$ and the intersection pairing in $N$ can be found, for example, in \cite[Section 3]{GL}. 

\begin{lemma} \label{lem: linking intersection}  
\begin{equation} \label{eq: linking intersection}
\lambda([x],[y])\, =\, -\frac{1}{t^2} \, [X]\cdot [Y].
\end{equation}
\end{lemma}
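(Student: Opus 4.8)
The plan is to expand the intersection number $[X]\cd[Y]$ by bilinearity and show that, after a suitable isotopy, only two of the four resulting terms contribute, and that one of these vanishes modulo $\Z$ after division by $t^2$. Writing out
\[
[X]\cd[Y] \;=\; \Sigma_x\cd\Sigma_y \;-\; t\,(\Sigma_x\cd S_y)\;-\; t\,(S_x\cd\Sigma_y)\;+\; t^2\,(S_x\cd S_y),
\]
the task reduces to evaluating each of these four geometric intersection numbers in $N$.

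First I would fix a collar $M\times[0,1]\hookrightarrow N$ with $M=M\times\{0\}$ and isotope the closed surfaces $X$ and $Y$ into the interior of $N$ at two \emph{different} depths: push all of $X$ to sit near the level $M\times\{\epsilon_1\}$ and all of $Y$ near $M\times\{\epsilon_2\}$, with $0<\epsilon_1<\epsilon_2$, arranged so that the pieces $S_x,S_y$ run monotonically inward from these levels. Separating the depths forces two of the four terms to vanish: the slices $M\times\{\epsilon_1\}$ and $M\times\{\epsilon_2\}$ are disjoint, so $\Sigma_x\cd\Sigma_y=0$; and since $S_y$ lies entirely at levels $\ge\epsilon_2>\epsilon_1$ while $\Sigma_x$ sits at level $\epsilon_1$, we also get $\Sigma_x\cd S_y=0$. (This asymmetric vanishing is exactly what avoids a spurious factor of two: had both boundary surfaces been left on $M$, each cross term would contribute, doubling the answer.)

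The two surviving terms I would then evaluate geometrically. The piece $tS_x$ runs inward from level $\epsilon_1$ and so crosses the slice $M\times\{\epsilon_2\}$ along $t$ parallel copies of the curve $x$; intersecting with $\Sigma_y$, which sits in that slice, gives $S_x\cd\Sigma_y=\pm\,(x\cd_M\Sigma_y)$, the sign coming from the orientation of the collar direction. The remaining term $t^2(S_x\cd S_y)$ is $t^2$ times an integer. Dividing by $-t^2$ and reducing modulo $\Z$, the last term disappears and one is left with
\[
-\tfrac{1}{t^2}\,[X]\cd[Y]\;\equiv\;\pm\tfrac{1}{t}\,(x\cd_M\Sigma_y)\pmod{\Z},
\]
which by the geometric description of $\lambda$ (applied with $\de\Sigma_y=ty$) equals $\lambda([y],[x])=\lambda([x],[y])$, once the collar orientation is checked to produce the $+$ sign; the overall minus sign in \eqref{eq: linking intersection} is precisely what absorbs the collar-crossing sign.

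The routine checks are the orientation bookkeeping in the collar and the verification that the right-hand side is independent of the choices of $S_x,S_y,\Sigma_x,\Sigma_y$; the latter holds because changing $S_x$ by a closed surface $C\subset N$ alters $[X]\cd[Y]$ by $t\,(C\cd Y)$, and $C\cd Y\equiv 0\pmod t$ since $C$ meets $M$ in a null-homologous curve (here $H_2(M)=0$) that pairs with $\Sigma_y$ in a multiple of $t$. I expect the only genuine subtlety to be the depth-ordering in the isotopy: it is what breaks the symmetry between the two cross terms and pins down both the coefficient $1/t^2$ and the overall sign, so the bulk of the care goes into setting up that isotopy and tracking the induced orientations.
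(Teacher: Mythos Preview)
Your argument is correct and is essentially the paper's own proof: use a collar push to separate levels so that two of the four terms in the expansion vanish, identify the one surviving cross-term with $t$ times the geometric linking number, and note that the deep $t^{2}(S_x\cdot S_y)$ term disappears modulo~$\Z$ after dividing by $t^{2}$. The only cosmetic difference is that the paper pushes just $X$ into the interior while leaving $\Sigma_y\subset M$, so its surviving cross-term is $\Sigma_x^{\uparrow}\cdot tS_y$ rather than your $tS_x\cdot\Sigma_y$; your closing paragraph on independence of choices is unnecessary (the direct computation already works for arbitrary $S_x,S_y,\Sigma_x,\Sigma_y$) and its justification is a bit muddled, but this does not affect the main argument.
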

The proof of Lemma \ref{lem: linking intersection} is illustrated in Figure \ref{fig:Linking_Intersections}.
\begin{figure}[ht]
\includegraphics[width=6cm]{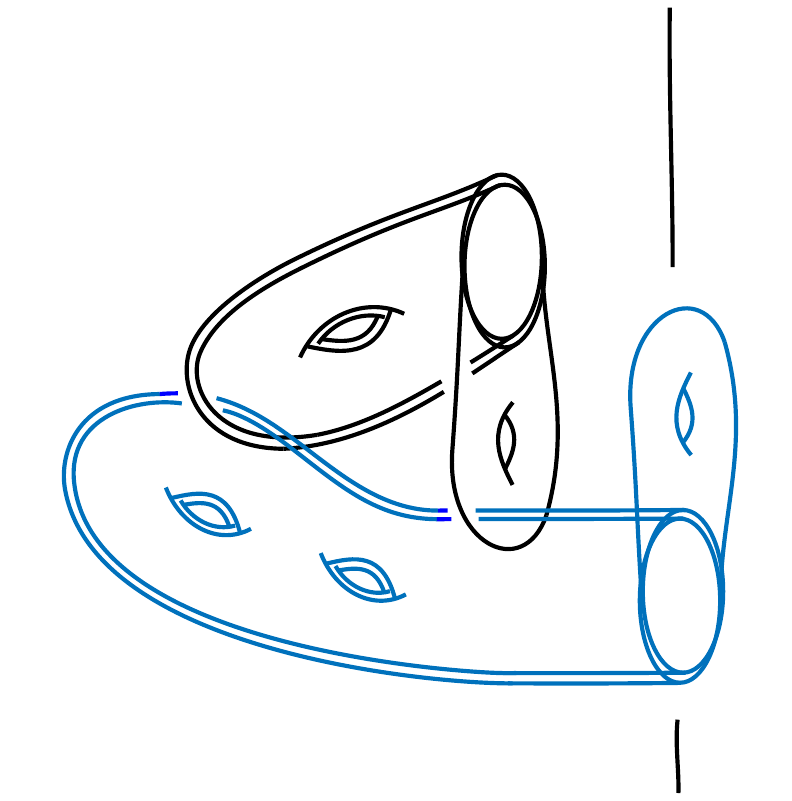}
\small
\put(-135,115){$tS_x$}
\put(-120,155){$N^4$}
\put(-22,155){$M^3=\partial N$}
\put(-67,139){$tx^{\uparrow}$}
\put(-88,70){$\Sigma_x^{\uparrow}$}
\put(-9,90){$\Sigma_y$}
\put(-150,32){$tS_y$}
\put(-16,20){$ty$}
\caption{Proof of Lemma \ref{lem: linking intersection}.}
\label{fig:Linking_Intersections}
\end{figure}

In more detail, push $x, \Sigma_x$ slightly into the interior of $N$, denoting the result by $x^{\uparrow}, \Sigma_x^{\uparrow}$. Denote $\Sigma_x^{\uparrow}-tS_x$ by $X^{\uparrow}$. Since $\Sigma_y$ is in $M$, $\Sigma_y\cap X^{\uparrow}=\emptyset$. Thus the intersection number $X^{\uparrow}\cdot Y$ is computed using intersections $tS_x\cap tS_y$ and $\Sigma_x^{\uparrow}\cap tS_y$.

Each intersection point $S_x\cap\kern-0.7em|\kern0.7em S_y$ gives rise to $t^2$ intersection points of the same sign between $tS_x$ and $tS_y$, so the contribution to the right hand side of \eqref{eq: linking intersection} is $0\in\Q/\Z.$ Finally, the intersections $\Sigma_x^{\uparrow} \cap S_y$ take place in a collar neighborhood of $\partial N$ where $S_y$ may be assumed to be a product $y\times I$, so $\Sigma_x^{\uparrow} \cdot tS_y =t(\Sigma_x\cdot y)$. \qed

\section{The triple torsion linking form} \label{sec: triple linking}
Throughout the rest of the paper $M$ will denote a rational homology 3-sphere. Surfaces used at various points in the construction will be mapped into $M$, in general they will not be embedded.
Fix $t\in \Z$ such that $tx=0$ for all $x\in H_1(M)$. Suppose $[x],[y],[z]$ are elements of $ H_1 (M)$ with 
\begin{equation} \label{eq: trivial linking}
    \lambda([x],[y])=\lambda([x],[z])=0.
\end{equation}  
Let $x, y, z$ be oriented curves embedded in $M$, representing their respective homology classes.
Consider a compact, oriented surface $\Sigma$ mapped into $M$ with $tx=\partial \Sigma$. Concretely, we view $tx$ as the curve $x$ with multiplicity $t$, so $\partial \Sigma$ wraps around it $t$ times, and a small oriented meridional circle $m$ to the curve $x$ has $t$ transverse intersections with the surface, all with the same sign. Because of the trivial linking assumption \eqref{eq: trivial linking}, the intersection numbers $\Sigma\cdot y$, $\Sigma \cdot z$ are elements of $t\, \Z$. 
Using finger moves on $y, z$ across $x$ (i.e. band-summing $y,z$ with $\pm m$), we find curves $y', z'$ homologous (in fact, isotopic) in $M$ to $y, z$ such that 
\begin{equation} \label{eq: intersection zero}
      \Sigma\cdot y'\, = \, \Sigma\cdot z' \, =\, 0.
\end{equation}
For brevity of notation, we will omit the primes and assume that $y, z, \Sigma$ satisfy \eqref{eq: intersection zero}. Surgering $\Sigma$ if necessary, we will assume $\Sigma\subset M\setminus (y\cup z)$.

\begin{definition} \label{def: triple linking}
    Consider homomorphisms $\phi, \psi\colon H_1(\Sigma)\to \Z$, defined as follows. Given $[\alpha]\in H_1(\Sigma)$, consider a $2$-chain $A$ in $M$ with $t\alpha=\partial A$. Define 
    \begin{equation} \label{phi} \phi([\alpha])\, =\, A\cdot y, \; \psi([\alpha])\, =\, A\cdot z. \end{equation} 
    The definition of $\phi, \psi$ is independent of a choice of $A$. Indeed, if $A'$ is another $2$-chain with $\partial A'=t\alpha$ then $A-A'$ is a 2-cycle which is null-homologous since $H_2(M)=0$, so the algebraic intersection number of $A-A'$ with any 1-cycle is trivial.
    
    The homomorphisms \eqref{phi} define cohomology classes $\Phi, \Psi\in H^1(\Sigma)\cong H^1(\Sigma, \partial \Sigma)$, and their cup product $\Phi\cup \Psi$ is an element of $H^2(\Sigma, \partial \Sigma)\cong \Z$. The triple linking is defined as 
    \begin{equation} \label{eq: triple linking}
        \lambda_3(x, y, z)\, :=\, \frac{1}{t} \, \Phi\cup \Psi\, \in\, \Q. 
    \end{equation}
\end{definition}

Before proving that $\lambda_3$ is well-defined as an element of $\Q/\Z$  on homology classes in Theorem \ref{thm: well defined}, we give a reformulation of the expression \ref{eq: triple linking}.
Let $g$ be the genus of $\Sigma$, and consider a symplectic basis of curves $\{ \gamma_i, \delta_i\}, i=1,\ldots, g$ on $\Sigma$. More precisely, for each $i$ choose an ordering of the pair $\gamma_i, \delta_i$, and endow these curves with some orientations, subject to the condition that the ordering and the orientations of $\gamma_i, \delta_i$ induce the given orientation of $\Sigma$.

\begin{figure}[ht]
\includegraphics[width=6cm]{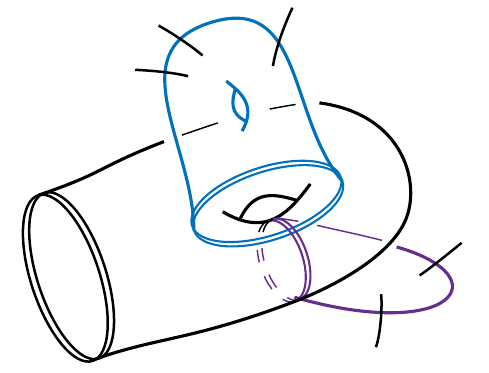}
\small
\put(-174, 30){$tx$}
\put(-116, 42){$t\gamma_1$}
\put(-93, 28){$t\delta_1$}
\put(-96, 128){$C_1$}
\put(-16, 14){$D_1$}
\put(-132, 105){$y$}
\put(-121, 125){$y$}
\put(-67, 130){$z$}
\put(-45,2){$z$}
\put(-7,40){$y$}

\caption{A schematic illustration of a genus one surface $\Sigma$, with $\partial C_1=t\gamma_1$, $\partial D_1=t\delta_1$. The 2-complex is mapped into $M$, and the intersection numbers of $C_1, D_1$ with $y,z$ are recorded in \eqref{eq: triple linking definition}.}
\label{fig: rational grope}
\end{figure}

Consider compact oriented surfaces $C_i, D_i$ mapped into $M$ with $\partial C_i= t\gamma_i, \partial D_i = t\delta_i$, Figure \ref{fig: rational grope}. 
An expression for \eqref{eq: triple linking}, with respect to the given symplectic basis of $\Sigma$, is given by
\begin{equation} \label{eq: triple linking definition}
    \lf(x, y, z)\, :=\, \frac{1}{t}\, \sum_{i=1}^g  (C_i\cdot y)(D_i\cdot z) - (C_i\cdot z)(D_i\cdot y) \in\Q. 
\end{equation}

Note that \eqref{eq: triple linking definition} is independent of the choice on an ordering and orientations of $\gamma_i, \delta_i$, as long as they induce the given orientation of $\Sigma$. In the following proof, it will be convenient to specify the surface $\Sigma$ used in the formula on the right hand side of \eqref{eq: triple linking definition}, and in this case we will use the notation $\lf(\Sigma, y, z)$.

\begin{remark} \label{rem: lf} \
\begin{enumerate} 
\item 
The 2-complex assembled of $\Sigma$ and the collection of surfaces $C_i, D_i$ may be thought of as a rational version of a grope \cite[Chapter 2]{FQ}. 
\item 
Unlike the classical torsion linking pairing $\lambda$, the triple linking form $\lf$ in general depends on the choice of $t$: $\lf$ scales as $t^3$, while the factor $\frac{1}{t}$ in \eqref{eq: triple linking}, \eqref{eq: triple linking definition} is forced by the well-definedness proof below.  
\end{enumerate}
\end{remark}

\begin{theorem}\label{thm: well defined}
    Given three classes $[x],[y],[z]\in H_1(M;\Z)$ with \[ \lambda([x],[y])=\lambda([x],[z])=0,\] 
    the triple torsion linking $ \lf([x],[y],[z])\in \Q/\Z$ is well-defined.
\end{theorem}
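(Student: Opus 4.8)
\emph{Plan.} Throughout I keep $t$ fixed; by the Remark $\lf$ genuinely depends on $t$, so the assertion is independence of the remaining geometric data. The cohomological expression \eqref{eq: triple linking} makes no reference to the symplectic basis or to the auxiliary surfaces $C_i,D_i$, and $\phi,\psi$ were already shown to be independent of the bounding $2$-chains $A$; hence the classes $\Phi,\Psi\in H^1(\Sigma)$, and therefore $\lf(\Sigma,y,z)$, depend only on the curves $x,y,z$ and on the surface $\Sigma$ with $\partial\Sigma=tx$. Two things remain: independence of the representatives $y,z$ (and $x$) within their homology classes, and independence of $\Sigma$. The single tool I will use repeatedly is an ``integration by parts'' identity for chains in the $3$-manifold $M$: if $A,W$ are $2$-chains with $\partial A=t\alpha$, then $A\cdot\partial W=\pm\,t\,(\alpha\cdot W)$, proved exactly as in Lemma \ref{lem: linking intersection} by examining the boundary of the $1$-manifold $A\cap W$. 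Its other consequence, which I record now, is that $\tfrac1t\,\Phi\equiv i^{*}\theta_y\pmod{\Z}$ as elements of $H^1(\Sigma;\Q/\Z)$, where $i\colon\Sigma\hookrightarrow M$ and $\theta_y\in H^1(M;\Q/\Z)$ is the linking class $a\mapsto\lambda(a,y)$; indeed $\tfrac1t\,\phi([\alpha])=\tfrac1t\,A\cdot y=\lambda([\alpha],[y])$ by the geometric formula for $\lambda$. Likewise $\Psi=\pm\,i^{*}\zeta$ with $\zeta\in H^1(M\setminus z)$ dual to a surface $R$ with $\partial R=tz$.

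\emph{Changing $y$ and $z$.} Let $y'$ be homologous to $y$, say $y'-y=\partial W$. For $\alpha\in H_1(\Sigma)$ with $\partial A=t\alpha$ the identity above gives $\phi_{y'}([\alpha])-\phi_{y}([\alpha])=A\cdot(y'-y)=A\cdot\partial W=\pm t\,(\alpha\cdot W)$, so $\Phi_{y'}=\Phi_{y}\pm t\rho$ for the integral class $\rho\in H^1(\Sigma)$ represented by $\alpha\mapsto\alpha\cdot W$. Consequently $\tfrac1t\,\Phi_{y'}\cup\Psi$ and $\tfrac1t\,\Phi_{y}\cup\Psi$ differ by $\pm\langle\rho\cup\Psi,[\Sigma,\partial\Sigma]\rangle\in\Z$, i.e. by $0$ in $\Q/\Z$. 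The same computation applied to $\Psi$ handles changes of $z$. This is exactly the step that forces the normalizing factor $\tfrac1t$.

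\emph{Changing $\Sigma$ (the main point).} Take $\Sigma,\Sigma'$ both bounding $tx$ and both disjoint from $z$. Then $F:=\Sigma-\Sigma'$ is a $2$-cycle, null-homologous since $H_2(M)=0$. The homomorphisms $\phi,\psi$ extend over $H_1(F)$ (well-defined again because $H_2(M)=0$) and restrict on $\Sigma,\Sigma'$ to the classes used there; since $\Phi,\Psi$ vanish on $\partial\Sigma$ by \eqref{eq: intersection zero}, the two relative cup products assemble into a single evaluation over the closed surface $F$, giving $\lf(\Sigma,y,z)-\lf(\Sigma',y,z)=\tfrac1t\,\langle\Phi_F\cup\Psi_F,[F]\rangle\bmod\Z$. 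Using the reformulation from the first paragraph, $\tfrac1t\,\Phi_F\equiv j^{*}\theta_y$ and $\Psi_F=\pm j^{*}\zeta$ for $j\colon F\hookrightarrow M\setminus z$, so this difference equals $\pm\langle\theta_y\cup\zeta,\,j_{*}[F]\rangle\in\Q/\Z$. Now $[F]=0$ in $H_2(M)$, and by Mayer--Vietoris $\ker\!\big(H_2(M\setminus z)\to H_2(M)\big)$ is spanned by the linking tori of the components of $z$; hence $j_{*}[F]$ is an integer combination of such tori $T_z$. On $T_z$, with basis given by a meridian $\mu$ and a longitude $\ell$ homologous to $z$, one has $\theta_y(\mu)=\lambda(0,y)=0$ (the meridian bounds in $M$), $\theta_y(\ell)=\lambda(y,z)$, and $\zeta(\mu)=\mu\cdot R=t$. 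Therefore $\langle\theta_y\cup\zeta,[T_z]\rangle=\pm\,t\,\lambda(y,z)=\pm\,\lambda(ty,z)=0$ in $\Q/\Z$, and the difference vanishes. A change of the representative of $x$ is absorbed here as well: any two surfaces bounding $t$ times homologous curves differ by a $2$-cycle, which bounds in $M$.

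\emph{Main obstacle.} The delicate step is the $\Sigma$-independence: unlike the other choices it cannot be reduced to a local ``scaling by $t$'' move, and one must pass to the closed cycle $F$ and to the global linking classes on $M\setminus z$. The torus computation is where the torsion hypothesis enters decisively, through $t\,\lambda(y,z)=\lambda(ty,z)$: this quantity is a nonzero rational in general but vanishes in $\Q/\Z$, which is precisely why $\lf$ is well defined only modulo $\Z$ and not as a rational number.
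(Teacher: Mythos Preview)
Your argument is correct and takes a genuinely different route from the paper for the crucial step, independence of the bounding surface $\Sigma$. The paper argues geometrically: it forms the closed surface $\overline\Sigma=(\Sigma\cup-\Sigma')\#(-kT)$, which is null-homologous in $M$, finds a bounding $3$-manifold $N$ mapped into $M$, and uses that $\ker[H_1(\overline\Sigma;\Q)\to H_1(N;\Q)]$ is Lagrangian to produce a geometric half-basis $\{\sigma_i\}$ on which the second-stage intersection numbers with $y$ and $z$ all vanish, forcing $\lf(\overline\Sigma,y,z)=0$. Your approach bypasses the $3$-manifold entirely: by recognising $\Psi_F$ as the restriction of an \emph{integral} class $\zeta\in H^1(M\setminus z;\Z)$ (via the integration-by-parts identity) and $\tfrac1t\Phi_F\bmod\Z$ as the restriction of the linking class $\theta_y\in H^1(M;\Q/\Z)$, the difference depends only on $j_*[F]\in H_2(M\setminus z)$, which is already spanned by linking tori. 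This is cleaner and more conceptual; the paper's argument is more hands-on and keeps the rational-grope picture in view. Both land on a torus computation, though yours yields $t\,\lambda([y],[z])=0\in\Q/\Z$ while the paper's produces an explicit integer directly.

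One genuine (if minor) gap: your closing sentence, ``any two surfaces bounding $t$ times homologous curves differ by a $2$-cycle,'' is false as written --- if $\partial\Sigma=tx$ and $\partial\Sigma'=tx'$ with $x\neq x'$, then $\Sigma-\Sigma'$ has nonzero boundary $t(x-x')$. You need the intermediate step the paper supplies in its part (d): choose $S$ with $\partial S=x'-x$, verify $S\cdot y=S\cdot z=0$ (using that $\Sigma+tS-\Sigma'$ is a closed, hence null-homologous, surface), surger $S$ off $y\cup z$, and then compare $\Sigma$ with $\Sigma'+tS$. This last is a genuine $\Sigma$-change handled by your main argument, and the $tS$ summand contributes $\tfrac1t\cdot t\cdot(\text{integer})\in\Z$ to $\lf$. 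This is routine, but it is not ``absorbed'' for free.
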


The proof of Theorem \ref{thm: well defined} follows from the following lemma.

\begin{lemma} \label{lem: well defined}
The triple linking form in equation \eqref{eq: triple linking definition} is independent of a choice of:
\begin{enumerate}[label=(\alph*)]
\item\label{1} surfaces $C_i, D_i$ with boundary $t\gamma_i, t\delta_i$,
\item\label{2} a symplectic basis $\{ \gamma_i, \delta_i\} $ of $S$,
\item\label{3} a bounding surface $\Sigma$, $\partial \Sigma=t\alpha$, 
\item\label{4} a curve $x$ representing its homology class, 
\item\label{5} curves $y, z$ within their homology classes.
\end{enumerate}
\end{lemma}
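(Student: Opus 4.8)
The plan is to establish the five independence statements \ref{1}--\ref{5} in turn, reducing everything to two mechanisms: the vanishing $\Sigma\cdot y=\Sigma\cdot z=0$ forced by \eqref{eq: intersection zero}, and the vanishing of $H_2(M)$. Throughout I regard the right-hand side of \eqref{eq: triple linking definition} as $\tfrac1t\,(\Phi\cup\Psi)[\Sigma,\partial\Sigma]$, where $\Phi,\Psi\in H^1(\Sigma)$ are the classes of the homomorphisms $\phi,\psi$ of Definition \ref{def: triple linking}. Parts \ref{1} and \ref{2} are formal. For \ref{1}, each number $C_i\cdot y$, $C_i\cdot z$, $D_i\cdot y$, $D_i\cdot z$ depends only on the boundary $t\gamma_i$ or $t\delta_i$, not on the bounding surface: this is precisely the argument already given in Definition \ref{def: triple linking}, since the difference of two bounding surfaces is a $2$-cycle, hence null-homologous as $H_2(M)=0$, hence has zero algebraic intersection with any $1$-cycle. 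For \ref{2}, the sum $\sum_i \phi(\gamma_i)\psi(\delta_i)-\phi(\delta_i)\psi(\gamma_i)$ is by definition the value of $\Phi\cup\Psi$ on the fundamental class $[\Sigma,\partial\Sigma]$; a symplectic basis only furnishes coordinates for this skew pairing, and any two symplectic bases differ by an element of $\mathrm{Sp}(2g,\Z)$, which preserves it. Thus the value depends only on the mapped surface $\Sigma$ and the curves $y,z$, and I write it $\lf(\Sigma,y,z)$.

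The heart of the argument is \ref{3}. Given two surfaces $\Sigma,\Sigma'$ with $\partial\Sigma=\partial\Sigma'=tx$, both satisfying \eqref{eq: intersection zero}, I would glue them along $tx$ to form a closed oriented surface $\hat\Sigma=\Sigma\cup_{tx}\overline{\Sigma'}$ mapped into $M$, and extend $\phi,\psi$ to $\hat\phi,\hat\psi\colon H_1(\hat\Sigma)\to\Z$ by the same recipe $\hat\phi(\alpha)=A_\alpha\cdot y$, $\partial A_\alpha=t\alpha$. The gluing curve $tx$ is separating in $\hat\Sigma$, and crucially $\phi,\psi$ vanish on its class, since $\Sigma$ bounds $tx$ and $\Sigma\cdot y=\Sigma\cdot z=0$. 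Hence the intersection form on $H_1(\hat\Sigma)$ block-diagonalizes with no cross terms and the terms pairing against the gluing class drop out, giving $(\hat\Phi\cup\hat\Psi)[\hat\Sigma]=(\Phi\cup\Psi)[\Sigma]-(\Phi'\cup\Psi')[\Sigma']=t\,\lf(\Sigma,y,z)-t\,\lf(\Sigma',y,z)$. It then remains to show $(\hat\Phi\cup\hat\Psi)[\hat\Sigma]\equiv 0\pmod t$. For this I would reduce mod $t$: using the geometric formula $\lambda([\alpha],[y])=\tfrac1t(A_\alpha\cdot y)$, the reduction $\hat\Phi\bmod t$ equals $j^*\mathcal U$, the pullback under $j\colon\hat\Sigma\to M$ of the class $\mathcal U\in H^1(M;\Z/t)\cong\operatorname{Hom}(H_1(M),\Z/t)$ corresponding to $[\alpha]\mapsto t\lambda([\alpha],[y])$, and likewise $\hat\Psi\bmod t=j^*\mathcal V$ for $z$. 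Therefore $(\hat\Phi\cup\hat\Psi)[\hat\Sigma]\equiv(\mathcal U\cup\mathcal V)(j_*[\hat\Sigma])\pmod t$, and since $\hat\Sigma=\Sigma-\Sigma'$ is null-homologous in $M$ (because $H_2(M)=0$), we have $j_*[\hat\Sigma]=0$, so the pairing vanishes mod $t$ and $\lf(\Sigma,y,z)-\lf(\Sigma',y,z)\in\Z$. I expect this mod-$t$ step---identifying the integral cup product with a pullback from $M$ and then invoking null-homology---to be the main obstacle, as it is exactly where the power of $t$ in \eqref{eq: triple linking definition} is forced.

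For \ref{5}, replacing $y$ by a homologous $y'$ and choosing a $2$-chain $G$ with $\partial G=y'-y$ changes $\phi$ into $\phi'$ with $(\phi'-\phi)(\alpha)=A_\alpha\cdot\partial G=\pm\,t\,(\alpha\cdot G)$, using the Leibniz identity $A_\alpha\cdot\partial G=\pm\,\partial A_\alpha\cdot G$ and $\partial A_\alpha=t\alpha$; hence $\phi'-\phi=t\,c$ for a homomorphism $c\colon H_1(\Sigma)\to\Z$, and substituting into \eqref{eq: triple linking definition} changes $\lf$ by the integer $\sum_i c(\gamma_i)\psi(\delta_i)-c(\delta_i)\psi(\gamma_i)$; the case of $z$ is identical. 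Finally, for \ref{4}, given homologous $x,x'$ and a surface $F$ with $\partial F=x'-x$, I would build $\Sigma'$ by attaching the $t$ parallel copies of $F$ needed to replace the boundary $tx$ by $tx'$; by \ref{3} it suffices to compare $\lf(\Sigma',y,z)$ with $\lf(\Sigma,y,z)$. The new homology splits off $t$ identical parallel blocks, each contributing the same amount $P$ to $(\Phi'\cup\Psi')[\Sigma']$, while the classes of the gluing circles (copies of $x$) again contribute nothing because $\phi,\psi$ vanish there; thus $(\Phi'\cup\Psi')[\Sigma']-(\Phi\cup\Psi)[\Sigma]=tP$ and the value changes by $P\in\Z$. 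Assembling \ref{1}--\ref{5} shows $\lf([x],[y],[z])$ is well-defined in $\Q/\Z$, which is Theorem \ref{thm: well defined}.
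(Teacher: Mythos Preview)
Your proof is correct, and for part \ref{3} you take a genuinely different route from the paper. The paper's argument for \ref{3} is geometric: it works in $M\setminus(y\cup z)$, identifies $H_2(M\setminus(y\cup z))\cong\Z$ generated by a peripheral torus $T$, corrects $\Sigma\cup(-\Sigma')$ by $-kT$ so that it becomes null-homologous, then invokes an oriented bounding $3$-manifold $N$ (via the Atiyah--Hirzebruch spectral sequence). The key step is that $\ker[H_1(\overline\Sigma;\Q)\to H_1(N;\Q)]$ is a rational Lagrangian, realized by disjoint simple closed curves (citing Farb--Margalit), on which both $\phi$ and $\psi$ vanish because the curves bound in $N\subset M\setminus(y\cup z)$; this forces the closed-surface expression to vanish. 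Your approach bypasses all of this: you reduce $\hat\Phi,\hat\Psi$ mod $t$, observe that they are pulled back from classes $\mathcal U,\mathcal V\in H^1(M;\Z/t)$ (since $\hat\phi(\alpha)\bmod t$ depends only on $j_*[\alpha]$ via the linking form), and then use $j_*[\hat\Sigma]=0$ in $H_2(M;\Z)$, hence also in $H_2(M;\Z/t)$ by naturality of reduction. This is cleaner and avoids both the bounding $3$-manifold and the geometric-Lagrangian step; what the paper's approach buys is a more hands-on picture that later resonates with the rational-grope and Matsumoto computations, while yours isolates exactly why the factor $\tfrac1t$ (and not $\tfrac1{t^2}$) is the right normalization.

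Two minor points. First, in \ref{3} your gluing $\Sigma\cup_{tx}\overline{\Sigma'}$ tacitly assumes the abstract boundaries can be identified; this is harmless once you tube boundary components so that $\partial\Sigma$ and $\partial\Sigma'$ are each a single circle covering $x$ with degree $t$, and such tubing does not change $\lf$ since the added symplectic pair has $\phi(\gamma)=\psi(\gamma)=0$. Second, your \ref{4} is a bit sketchy about exactly how the ``$t$ parallel blocks'' sit inside $H_1(\Sigma')$; the paper's version makes this cleaner by first observing that $F\cdot y=F\cdot z=0$ (from $0=(\Sigma+tF-\Sigma')\cdot y$), so $tF$ may be taken disjoint from $y,z$, and then invoking \ref{3} together with the trivial $\lf(tF,y,z)\in\Z$. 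Parts \ref{1}, \ref{2}, and \ref{5} match the paper's proof.
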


\begin{proof}

Items \ref{1} and \ref{2} follow from the fact that $H_2(M)=0$ and from the basis-independent formula \eqref{eq: triple linking}.

{\em Proof of \ref{3}}. Consider $y,z$ as oriented curves embedded in $M$.
Since $H_2(M)=0$, $H_2(M\setminus (y\cup  z))\cong\Z$ is generated by the homology class $[T]$ of the torus boundary $T$ of a tubular neighborhood of either one of the two curves, say $y$. A symplectic basis of curves in $T$ is given by a meridian $\mu$ and a longitude $\lambda$ of $y$. Consider the analogue of the formula \eqref{eq: triple linking definition} for the closed genus 1 surface $T$, where $\partial C=t\mu, \partial D=t\lambda$. The result is trivial in $\Q/\Z$ since $C$ may be taken to be $t$ copies of the meridional disk bounded by $\mu$, and hence $C\cdot y=\pm t$ and $C\cdot z=0$.

Let $\Sigma'$ be another oriented surface with $\partial \Sigma'=t x$. Then the homology class of the 2-cycle $\Sigma\cup -\Sigma'$ equals a multiple $k[T]$ in $H_2(M\setminus (y\cup z))$. A connected sum $\overline \Sigma:=(\Sigma\cup -\Sigma')\# (-kT)$ is null-homologous in $M$; by an application of the Atiyah-Hirzebruch spectral sequence it follows that $\overline \Sigma$ bounds a map of an oriented $3$-manifold $N$ into $M$. 

Let $\overline g$ be the genus of $\overline \Sigma$. By Poincar\'{e} duality, ${\rm ker}[H_1(\overline\Sigma; \Q)\to H_1(N; \Q)]$ is a rational Lagrangian subspace $L_{\Q}$ of $H_2(\overline\Sigma; \Q)$, in particular the rank of $L_{\Q}$ is $\frac{1}{2} \, {\rm rk}(H_2(\overline\Sigma; \Q))$. The intersection of $L_{\Q}$ with $H_2(\overline\Sigma; \Z) \subset H_2(\overline\Sigma; \Q)$ is an integral Lagrangian $L$: a sublattice of $H_2(\overline\Sigma; \Z)$ with $H_2(\overline\Sigma; \Z)/L\cong \Z^{\overline g}$. By \cite[Chapter 6]{FM} its basis may be represented by a {\em geometric} Lagrangian: a collection of pairwise disjoint, homologically independent simple closed curves $\sigma_1,\ldots, \sigma_{\overline g}$ in $\overline\Sigma$.

Consider a surface $S_i$ bounded by $t\sigma_i$ in $M$ for each $i$. 
Since $[\sigma_i]=0\in H_1(N, \Q)$, a multiple $m t\, \sigma_i$ bounds in $N$ integrally for some $m\in\Z\setminus 0$. On one hand, $m t\, \sigma_i$ bounds $m S_i$ in $M$, on the other hand, it also bounds a surface $S'_i$ in $N$.  
Since $N$ is mapped into $M\setminus (x\cup y)$, the $\Z$-valued algebraic intersection numbers $S'_i\cdot y=-m\, (S_i\cdot y), \, S'_i\cdot z=-m\, (S_i\cdot z)$ are trivial. Therefore $S_i\cdot y= S_i\cdot y=0$. 

Completing $\{ \sigma_i\}$ to a symplectic basis of $\overline\Sigma$, the expression $\lf(\overline\Sigma, y, z)$ in \eqref{eq: triple linking definition} for this symplectic basis is trivial.
It was observed in \ref{2} that $\lf(\overline\Sigma, y, z)$ is independent of a symplectic basis, so it can be computed as a sum of three terms corresponding to $\Sigma, -\Sigma', kT$ respectively. 
Moreover, it was shown above that the term corresponding to $kT$ is zero. Hence the expressions $\lf(\Sigma, y, z)$, $\lf(\Sigma', y, z)$ are equal.

{\em Proof of \ref{4}}. Suppose $x_1, x_2$ are homologous, with 
\[
tx_i=\partial \Sigma_i, \; \Sigma_i\cap y=\Sigma_i\cap z=\emptyset, \; i=1,2.
\]
Consider an oriented surface $S$ with $\partial S=x_2-x_1$. Then $\widetilde \Sigma:=\Sigma_1+tS-\Sigma_2$ is a closed surface; it is null-homologous since $H_2(M)=0$. It follows that
\[
0\, =\, \widetilde \Sigma\cdot y\, =\, t(S\cdot y)
\]
where $t\neq 0$, so $S\cdot y=0$, and analogously $S\cdot z=0$. Surgering it if necessary, $S$ may be assumed to be disjoint from $y, z$. Moreover, the multiple $t(x_1-x_2)$ bounds $tS$, and $\lf(tS, y, z)=0\in \Q/\Z$ because of the factor $t$ in $tS$. Now $tx_1=\partial \Sigma_1=\partial (\Sigma_2-tS)$. It follows from \ref{3} that 
\[ \lf(\Sigma_1, y, z)=\lf(\Sigma_2-tS, y, z)=\lf(\Sigma_2, y, z).
\]

{\em Proof of \ref{5}.} Let $tx=\partial \Sigma$, and suppose $y, y', z, z'$ are curves in $M\setminus \Sigma$ with $[y]=[y'], [z]=[z']\in H_1(M)$. Consider a factor, say $C_i\cdot y$, in one of the terms in \eqref{eq: triple linking definition}. Its multiple $\frac{1}{t} (C_i\, \cdot\,  y)$ equals the torsion linking pairing $\lambda([\gamma_i],[y])$. Since the value of this pairing in $\Q/\Z$ is well-defined on homology classes, replacing $y$ with a homologous $y'$ changes $C_i \cdot  y$ by an element of $t\Z$. The overall quantity \eqref{eq: triple linking definition} is unchanged, as an element of $\Q/\Z$.

This concludes the proofs of Lemma \ref{lem: well defined} and of Theorem \ref{thm: well defined}.
\end{proof}

\section{The Matsumoto triple product} \label{sec: Matsumoto} 
In this section we recall the definition of the Matsumoto product and give a reformulation in terms of intersections of capped surfaces. This will serve as a background for the proofs in Section \ref{sec: vanishing}.

We start by recalling the definition from \cite[Section 3]{Matsumoto}. Let $N$ be a compact oriented 4-manifold with $H_1(M)=0$, and let $X_1, X_2, X_3\in \pi_2(N)$ be homotopy classes with $[X_i]\cdot[X_j]=0$ for $i\neq j\in\{1,2,3\}$. For each $i$, let $f_i\colon S^2\to N$ be a map in the homotopy class $X_i$, so that the three spheres $S_i:=f_i(S^2)$ are in general position. 

Since the intersection numbers vanish, the intersection points $S_i\cap S_j$ can be paired up by disjoint Whitney arcs in both $S_i$ and $S_j$. The orientation of the Whitney circles is defined as follows. For each ordered pair $(i,j)\in \{ (1,2), (2,3), (3,1)\}$, the Whitney arc for the intersections $S_i\cap S_j$ is oriented from positive to negative on $S_i$ and from negative to positive on $S_j$. 

Since $H_1(N)=0$, the Whitney circles for $S_i\cap S_j$ bound oriented surfaces denoted $\{ \Delta^{n}_{i,j}\}$; set $ \Delta_{i,j} := \cup_n  \Delta^{n}_{i,j}$. The Matsumoto triple product is defined as\footnote{The definition in \cite{Matsumoto} does not assume that the Whitney arcs are disjoint and the formula there has additional terms involving intersections between the arcs. We will not use this more general definition.}
\begin{equation} \label{eq: Matsumoto} \langle X_1,X_2,X_3\rangle \, :=\, S_1\cdot \Delta_{2,3}+S_2\cdot \Delta_{3,1}+S_3\cdot \Delta_{1,2}.
\end{equation}
It is shown in \cite[Proposition 4]{Matsumoto} that $\langle X_1,X_2,X_3\rangle$ is well defined in $\Z/I$ where $I$ is the indeterminacy ideal in $\Z$ defined by the intersection numbers $[X_i]\cdot Y$ for $i=1,2,3$ and all $Y\in H_2(M)$. Here the indeterminacy is present due to the choice of surfaces $\Delta_{i,j}$: the difference of two choices $\Delta_{i,j}-\Delta'_{i,j}$ is a 2-cycle whose algebraic intersection number with $S_k$ may affect \eqref{eq: Matsumoto}.

\begin{figure}[ht]
\centering
\includegraphics[height=3.6cm]{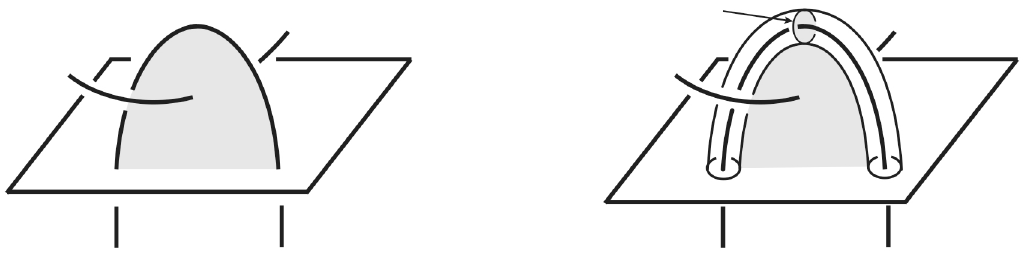}
{\small     
    \put(-294,6){${S}^{}_j $}
       \put(-280,63){${S}^{}_i $}
       \put(-400,78){${S}^{}_k $}
       \put(-330,44){$\Delta^n_{ij}$}
        \put(-50,6){${R}^{}_j $}
       \put(-36,63){${R}^{}_i $}
       \put(-155,78){${R}^{}_k $}
       \put(-107,42){$B^i_n$}
       \put(-96,104){$\alpha^i_n$}
       \put(-81,50){$\beta^i_n$}
       \put(-137,97){$A^i_n$}
    }  
    \caption{From 2-spheres $S_i$ to gropes of height 2.}
\label{fig: grope}
\end{figure}

Next we derive an expression for the Matsumoto product in a format that makes it easier to relate to $\lambda_3$.

The intersections of spheres in the construction above may be resolved by increasing genus; denote the surface obtained from $S_i$ by $R_i$, $i=1,2,3$. Second stage surfaces $ A^i_n, B^i_n$ can be added to a symplectic basis of curves $\{ \alpha^i_n, \beta^i_n \}$ of $R_i$: a meridional disk $A^i_n$ and the surface $B^i_n$ equal to $\Delta^n_{ij}$ minus a collar on its boundary, attached to a dual curve corresponding to the Whitney circle, Figure \ref{fig: grope}. These are gropes of height 2 mapped into $N$, \cite[Section 2]{FQ}. There are two ways to resolve each pair of intersections $S_i\cap S_j$: by adding genus to $S_i$ or $S_j$, and the choice is recorded in the superscript of the second stage surfaces. Let the surface

Consider an ordering and orientations of each pair $ \alpha^i_n, \beta^i_n $ as described in the paragraph following \eqref{eq: triple linking} in Section \ref{sec: triple linking}.
Let $g$ be the genus of $R_i$. 
Define
\begin{equation} \label{eq:  Matsumoto x}
    \langle R_i, R_j, R_k\rangle^i\, :=\, \, \sum_{n=1}^g  (A^i_n\cdot R_j)(B^i_n\cdot R_k) - (A^i_n\cdot R_k)(B^i_n\cdot R_j) \in\Z. 
\end{equation}
and consider the analogous expressions with superscripts $j,k$. (The superscripts are included to distinguish the notation from the Matsumoto product.)
\begin{lemma} \label{lem: Matsumoto} The Matsumoto triple product can be computed using \eqref{eq:  Matsumoto x}:
    \begin{equation} \label{eq: Matsumoto equality} 
    - \langle X, Y, Z\rangle \, =\,
    \langle R_1, R_2, R_3\rangle^1\,+ \, \langle R_2, R_3, R_1\rangle^2\, + \, \langle R_3, R_1, R_2\rangle^3.
\end{equation}
\end{lemma}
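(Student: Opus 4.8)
The plan is to evaluate each of the three cyclic expressions in \eqref{eq: Matsumoto equality} directly from the geometry of the genus-adding resolution and to recognize their sum as $\pm\langle X,Y,Z\rangle$. Since \eqref{eq:  Matsumoto x} is a sum indexed by the handles of $R_i$, and each such handle arises from resolving a single Whitney pair of intersection points between $S_i$ and one of the other two spheres, I would first reduce the statement to a local computation at one handle. Concretely, the $n$-th handle of $R_i$ (with superscript $i$) comes from tubing $S_i$ along a Whitney arc cancelling a pair of points of $S_i\cap S_l$, where $l\in\{j,k\}$; its meridian $\alpha^i_n$ bounds the small meridional disk $A^i_n$, and its dual curve $\beta^i_n$ bounds $B^i_n=\Delta^n_{il}$ with a collar removed.

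The two geometric inputs I would establish are: \emph{(i)} the meridional disk $A^i_n$ is a small disk transverse to the sheet $S_l$ that the tube follows, so $A^i_n\cdot R_l=\pm 1$, while $A^i_n$ can be taken disjoint from the remaining sphere, giving $A^i_n\cdot R_m=0$ for $m\neq l$; and \emph{(ii)} since $B^i_n$ coincides with $\Delta^n_{il}$ away from a collar on $\beta^i_n$, one has $B^i_n\cdot R_m=\Delta^n_{il}\cdot S_m$ for the remaining sphere $S_m$ with $m\neq i,l$. Substituting these into the summand of \eqref{eq:  Matsumoto x}, exactly one of the two products $(A^i_n\cdot R_j)(B^i_n\cdot R_k)$ and $(A^i_n\cdot R_k)(B^i_n\cdot R_j)$ survives, because the factor pairing $A^i_n$ against the sphere that is \emph{not} $S_l$ vanishes by \emph{(i)}; notably, the possibly nonzero number $B^i_n\cdot R_l=\Delta^n_{il}\cdot S_l$ never enters, since it is always killed by a zero factor. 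Each handle therefore contributes a single term $\pm\,\Delta^n_{il}\cdot S_m$, and summing over $n$ yields an intersection number of the form $\pm\,\Delta_{il}\cdot S_m$.

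Finally I would sum the three cyclic expressions and regroup the contributions according to the pair of spheres from which each intersection point came. Because the orientation convention on the Whitney arcs gives $\Delta_{ji}=-\Delta_{ij}$, I would check that a single Whitney pair of $S_i\cap S_j$ contributes the same term whether it is resolved by tubing $S_i$ or $S_j$; this simultaneously shows that the total is independent of the resolution choices and assembles precisely the three terms $S_1\cdot\Delta_{2,3}$, $S_2\cdot\Delta_{3,1}$, $S_3\cdot\Delta_{1,2}$ of the Matsumoto product \eqref{eq: Matsumoto}. The main obstacle is the sign bookkeeping: the $\pm 1$ in \emph{(i)} is governed by the orientation of the Whitney circle (``positive to negative on $S_i$, negative to positive on $S_j$'') and by the requirement that the ordering and orientations of $\alpha^i_n,\beta^i_n$ induce the orientation of $R_i$, and reconciling these conventions is exactly what forces the global minus sign in \eqref{eq: Matsumoto equality}. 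I expect to pin this sign down by verifying the model case of a single handle, where every relevant intersection number is $\pm 1$, and then observing that all handles are locally identical, so the same local sign recurs uniformly.
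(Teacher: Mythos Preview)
Your proposal is correct and follows essentially the same approach as the paper: the paper's proof records precisely your facts \emph{(i)} and \emph{(ii)} (a single intersection of $A^i_n$ with one of $R_j,R_k$, disjointness from the other, and $B^i_n\cdot R_k=\Delta^n_{ij}\cdot S_k$), and attributes the global minus sign to the orientation convention being opposite to Matsumoto's. Your write-up is in fact more complete than the paper's, which treats only the case where the $n$-th handle of $R_i$ arises from $S_i\cap S_j$; you correctly note that $l\in\{j,k\}$ and that the antisymmetry of \eqref{eq: Matsumoto x} together with $\Delta_{ji}=-\Delta_{ij}$ makes the answer independent of which sphere is tubed, a point the paper leaves implicit.
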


\begin{proof}
As shown in Figure \ref{fig: grope}, there is a single intersection point of the meridional disk $A^i_n$ with $R_j$, $A^i_n\cap R_k=\emptyset$, and $R_k\cap B_n=S_k\cap\Delta_{i,j}$. 
The minus sign in \eqref{eq: Matsumoto equality} is present due to the fact that our orientation convention is opposite of that in \cite{Matsumoto}.
\end{proof}

\begin{remark} \label{rem: Matsumoto} \
(1) In Lemma \ref{lem: Matsumoto} the surfaces $R_i$ were constructed starting from the spheres $S_i$. The formula \eqref{eq: Matsumoto equality} can be used to compute
the Matsumoto product in a simply-connected 4-manifold $N$ more generally as follows. Consider three classes in $H_2(N)\cong\pi_2(N)$ with pairwise intersection numbers equal to zero. Represent them by disjoint maps of surfaces $R_1, R_2, R_3$, and using the simple connectivity assumption choose maps of disks for second stage surfaces $A, B$. The result is a collection of capped surfaces. Then the equality \eqref{eq: Matsumoto equality} holds because surgering the bodies of the capped surfaces along one set of caps to get 2-spheres, parallel copies of the dual caps serve as Whitney disks for the resulting double points.
\end{remark}    

(2) The expression \eqref{eq:  Matsumoto x} is closely related to the higher order intersection invariant  \cite[Remark 9]{ScT} taking values in the space of trees modulo antisymmetry and Jacobi relations.

\section{Proof of Theorem \ref{thm: vanishing}} \label{sec: vanishing}
Let $S^4\setminus \mathcal{N}(M)=A\sqcup B$. By Hantzsche's theorem, there are two Lagrangian subspaces in $H_1(M)$, kernels of the inclusion maps into $A$ and into $B$. To be concrete, let $[x], [y],[z]\in{\rm ker}[H_1(M)\to H_1(A)]$.

Note that the inclusion map induces a surjection $H_1(M) \twoheadrightarrow H_1(A)$, so any choice of $t\in \Z$ such that $tx=0$ for all $x\in H_1(M)$ works in $H_1(A)$ as well. 

Consider three classes $[X], [Y], [Z]\in H_2(A)$ with pairwise trivial intersection numbers, and consider oriented surface representatives $X, Y, Z$. We will now formulate a version $\langle [X], [Y], [Z]\rangle_{\Q}$ of the Matsumoto triple pairing, adapted to the setting of a rational homology ball.

Since the intersection numbers vanish, we may surger the surfaces if necessary to find homologous representatives (still denoted $X, Y, Z$) which are pairwise disjoint. As in Section \ref{sec: triple linking}, consider a symplectic basis of curves $\{ \gamma^x_i, \delta^x_i\}, i=1,\ldots, g_x$ on $X$, and compact oriented surfaces $C^x_i, D^x_i$ mapped into $A$ with $\partial C^x_i= t\gamma^x_i, \partial D^x_i = t\delta^x_i$. 
In analogy with gropes \cite[Section 2]{FQ}, we call $X$ the {\em body} and $C_i, D_i$ {\em second stage surfaces} of this construction. Consider a version of equation \eqref{eq: triple linking definition} without the $\frac{1}{t}$ factor:
\begin{equation} \label{eq: rational Matsumoto x}
    \langle X, Y, Z\rangle^x_{\Q}\, :=\, \, \sum_{i=1}^g  (C^x_i\cdot Y)(D^x_i\cdot Z) - (C^x_i\cdot Z)(D^x_i\cdot Y) \in\Z. 
\end{equation}
A basis-independent definition can be given as in \eqref{eq: triple linking}.

\begin{remark}
We emphasize the difference of equations \eqref{eq: rational Matsumoto x} and \eqref{eq:  Matsumoto x}. The expression \eqref{eq:  Matsumoto x} 
 for the Matsumoto product uses surfaces $A^i_n, B^i_n$ bounded by curves in a symplectic basis on a surface. In a rational homology ball - the setting for \eqref{eq: rational Matsumoto x} - in general these curves do not bound, and the formula uses surfaces $C^x_i, D^x_i$ bounded by $t$ multiples of the curves.
 \end{remark}

Considering the analogous expressions for $y,z$, define the ``rational Matsumoto pairing'' 
\begin{equation} \label{eq: rational Matsumoto}
    \langle X, Y, Z\rangle_{\Q}\, :=\, \, \langle X, Y, Z\rangle^x_{\Q}\,+ \, \langle Y, Z, X\rangle^y_{\Q}\, + \, \langle Z, X, Y\rangle^z_{\Q}. 
\end{equation}

\begin{remark} \label{rem: defined on cycles}
We do not know if $\langle X, Y, Z\rangle_{\Q}$ is well-defined on homology classes; \eqref{eq: rational Matsumoto} will be used below as a quantity associated with particular representatives $X, Y, Z$.
\end{remark}

Following \cite{GL} and Section \ref{sec: linking intersercion}, consider surfaces $\Sigma_x, S_x$ mapped into $M$ and $A$ respectively with $\partial \Sigma_x=tx, \partial S_x=x$. We also consider the analogous surfaces for $y,z$.

Consider the following three closed surfaces mapped into $A$: 
\begin{equation} \label{eq: 2cycles}
X:= \Sigma_x-tS_x, \, Y:= \Sigma_y-tS_y, \,  Z:= \Sigma_z-tS_z.
\end{equation}

Push $\Sigma_x$ slightly into the interior of $A$, and denote the result by $\Sigma_x^{\uparrow}$ (and the effect on $X$ by $X^{\uparrow}$). 
We have the following analogue of the relation between the torsion linking pairing and the intersection pairing, Lemma \ref{lem: linking intersection}:

\begin{lemma} \label{lem: triple linking Matsumoto} Let $X, Y, Z$ be associated with $x, y, z$ as above. Then the equality
\[\lf ([x],[y],[z])\, =\, \frac{1}{t^3} \, \langle X^{\uparrow}, Y, Z\rangle_{\Q}. \]
holds in $\Q/\Z$.
\end{lemma}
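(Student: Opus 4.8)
## Proof Plan for Lemma \ref{lem: triple linking Matsumoto}

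The plan is to compute $\langle X^{\uparrow}, Y, Z\rangle_{\Q}$ term by term and match it against the definition \eqref{eq: triple linking definition} of $\lf([x],[y],[z])$, using the same geometric decomposition that drove the proof of Lemma \ref{lem: linking intersection}. Recall that $X^\uparrow = \Sigma_x^\uparrow - tS_x$, and analogously $Y = \Sigma_y - tS_y$, $Z = \Sigma_z - tS_z$. Because $\Sigma_y, \Sigma_z$ lie in the boundary $M$ while $\Sigma_x^\uparrow$ has been pushed into the interior of $A$, the only intersections between $X^\uparrow$ and the curves on $Y,Z$ (or their second-stage surfaces) that can contribute come from matching the interior piece $-tS_x$ against the pieces of $Y,Z$, and from the boundary-collar contribution of $\Sigma_x^\uparrow$ against $tS_y, tS_z$, exactly as in Lemma \ref{lem: linking intersection}. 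First I would fix a symplectic basis on $X^\uparrow$ compatible with the splitting into $\Sigma_x^\uparrow$ and $S_x$ parts, and likewise for $Y$ and $Z$, so that each summand $\langle X^\uparrow, Y, Z\rangle^x_\Q$ etc.\ in \eqref{eq: rational Matsumoto} can be expanded using the second-stage surfaces $C^x_i, D^x_i$.

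The key computation is to show that, after this expansion, the intersection numbers appearing in \eqref{eq: rational Matsumoto x} reduce to the intersection numbers appearing in the definition of $\lf$. The point is that a second-stage surface $C^x_i$ bounding $t\gamma^x_i$ on $X^\uparrow$ plays the same role as the $2$-chain $A$ in Definition \ref{def: triple linking}: the products $C^x_i \cdot Y$ and $C^x_i \cdot Z$ compute (up to the geometric cancellations described below) the values $\phi$ and $\psi$ of \eqref{phi} on the curve $\gamma^x_i$ living on the surface $\Sigma$. Concretely, I would argue that $C^x_i\cdot Y$ equals $t$ times the value $(C_i\cdot y)$ used in \eqref{eq: triple linking definition}: the factor of $t$ arises because $Y = \Sigma_y - tS_y$ and the collar argument of Lemma \ref{lem: linking intersection} shows $\Sigma_x$-type pieces meet $tS_y$ with multiplicity $t$, while the $tS_x \cap tS_y$ intersections cancel in pairs of $t^2$ contributing trivially to the relevant quantity. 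Tracking the three cyclic summands of \eqref{eq: rational Matsumoto}, each of the three products in a bilinear term carries one factor of $t$, so the total $\langle X^\uparrow, Y, Z\rangle_\Q$ scales as $t^3$ times the corresponding sum in \eqref{eq: triple linking definition}; dividing by $t^3$ then recovers $\lf$.

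The main obstacle I anticipate is bookkeeping rather than conceptual: one must verify that the three cyclic summands $\langle X^\uparrow, Y, Z\rangle^x_\Q$, $\langle Y, Z, X\rangle^y_\Q$, $\langle Z, X, Y\rangle^z_\Q$ assemble to match the single genus-$g$ sum in \eqref{eq: triple linking definition} rather than overcounting by a factor of three. I expect the resolution to parallel Lemma \ref{lem: Matsumoto}: the definition of $\lf$ only uses a bounding surface $\Sigma$ for $tx$ together with $2$-chains for $y,z$, so the asymmetry between the roles of $x$ versus $y,z$ in \eqref{eq: trivial linking} means the cyclic terms do not each reproduce the full $\lf$; instead the $X^\uparrow$-based term captures the genuine contribution while the $Y$- and $Z$-based terms vanish or cancel against the $S_x$-collar corrections. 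Because $\lf$ is only well-defined modulo $\Z$ (Theorem \ref{thm: well defined}), and because $\langle X,Y,Z\rangle_\Q$ is defined only up to the choices noted in Remark \ref{rem: defined on cycles}, I would phrase the final comparison as an equality in $\Q/\Z$, absorbing the $t^2$-divisible cancellation terms (the $tS_x\cap tS_y$ intersections) into the integral indeterminacy. This is precisely why the statement is asserted to hold only in $\Q/\Z$.
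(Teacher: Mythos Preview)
Your plan is essentially the paper's: split each surface into its $\Sigma$-part (in or near $M$) and its $tS$-part (deep in the interior of $A$), and track contributions via the collar geometry of Lemma \ref{lem: linking intersection}. The bookkeeping you flag as the main obstacle is resolved more cleanly than your ``vanish or cancel'' suggests. First, every second-stage surface attached to a curve on $\Sigma_y$ or $\Sigma_z$ lies in $M$, and since the body $X^\uparrow$ has been pushed into the interior, these second-stage surfaces miss $X^\uparrow$ entirely; this kills the $\Sigma$-part of the $y$- and $z$-summands outright, with no cancellation needed. Second, all ``deep'' contributions --- those involving $tS_x, tS_y, tS_z$ and the second-stage surfaces attached to them --- are multiples of $t^3$, not $t^2$: the factor $t$ in $tS_x$ replaces each symplectic pair $\gamma_i,\delta_i$ by $t$ parallel copies, and the bodies $tS_y, tS_z$ each contribute one further factor of $t$ to the two factors in \eqref{eq: rational Matsumoto x}. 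After dividing by $t^3$ these terms vanish in $\Q/\Z$. What remains is exactly the collar contribution of the second-stage surfaces on $\Sigma_x^\uparrow$ against $tS_y$ and $tS_z$, which reproduces $t^2$ times the sum in \eqref{eq: triple linking definition}; together with the $\frac{1}{t}$ already present in the definition of $\lf$ this gives the factor $\frac{1}{t^3}$.
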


{\em Proof of Lemma \ref{lem: triple linking Matsumoto}.} The setting is similar to that in Figure \ref{fig:Linking_Intersections}, except that now there is also the third surface $Z$, and the intersections are all between bodies and second stage surfaces. 

We start by noting that all second stage surfaces of $Y, Z$ that are attached to curves in $\Sigma_y, \Sigma_z$ are in M. therefore they are disjoint from the body of $X$ and do not contribute to \eqref{eq: rational Matsumoto}. 

Next, intersections ``deep in $N$'', corresponding to $S_x, S_y, S_z$ and their second stage surfaces, contribute multiples of $\pm t^3$ to the expression of the form \eqref{eq: rational Matsumoto x}. More precisely, the multiple $t$ of the base surface of $X$ has the effect of replacing each pair $\gamma^x_i, \delta^x_i$ with $t$ parallel copies, and taking $t$ times $S_y, S_z$ contributes an additional factor of $t^2$. Thus multiplied by $\frac{1}{t^3}$, the contribution of these intersections is trivial in $\Q/\Z$.

The remaining intersections are between the second stage surfaces attached to $\Sigma_x^{\uparrow}$ and $tS_y, tS_z$. Using the product structure of $S_x, S_z$ in a collar on $\partial N$, the formula in \eqref{eq: rational Matsumoto x} is seen to be identical to that in \eqref{eq: triple linking definition}. The fact that there are $t$ copies of both $S_y$ and $S_z$, and the factor $\frac{1}{t}$ in the definition of $\lf$ gives the factor $\frac{1}{t^3}$ in Lemma \ref{lem: triple linking Matsumoto}.
\qed

\begin{lemma} \label{lem: rational Matsumoto zero} Let $X^{\uparrow}, Y, Z\subset A\subset S^4$ be associated with $x, y, z$ as above. Then
\[ \langle X^{\uparrow}, Y, Z\rangle_{\Q}\, =\, t^2 \, \langle [X], [Y], [Z]\rangle \, =\, 0. \]
\end{lemma}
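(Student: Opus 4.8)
The statement packages two rather different assertions: the comparison $\langle X^{\uparrow}, Y, Z\rangle_{\Q} = t^2\,\langle [X],[Y],[Z]\rangle$ between the rational pairing of \eqref{eq: rational Matsumoto} and the integral Matsumoto triple product, and the vanishing $\langle [X],[Y],[Z]\rangle = 0$. My plan is to treat the first as a bookkeeping identity obtained by clearing the factors of $t$ out of the second-stage surfaces, and the second as the geometric core, exploiting that $A$ is a rational homology $4$-ball embedded in $S^4$.

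For the first equality I would compute $\langle [X],[Y],[Z]\rangle$ by the capped-surface formula of Lemma \ref{lem: Matsumoto} and Remark \ref{rem: Matsumoto}. Since $A$ is a rational homology ball, $H_1(A;\Q) = H_2(A;\Q) = 0$, so each symplectic basis curve $\gamma^x_i, \delta^x_i$ on $X$ bounds a \emph{rational} cap in $A$; concretely $\tfrac1t C^x_i$ and $\tfrac1t D^x_i$ are such caps, since $\partial C^x_i = t\gamma^x_i$ and $\partial D^x_i = t\delta^x_i$. Substituting these rational caps into \eqref{eq:  Matsumoto x} replaces each factor $A^i_n\cdot R_j$ by $\tfrac1t (C^x_i\cdot Y)$, and similarly for the $B, D$ terms, so every quadratic summand of \eqref{eq:  Matsumoto x} is scaled by $\tfrac1{t^2}$ relative to the corresponding summand of \eqref{eq: rational Matsumoto x}. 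Hence $\langle X,Y,Z\rangle^x_{\Q} = t^2\,\langle R_1,R_2,R_3\rangle^1$, and likewise for the $y,z$ terms; summing over the three cyclic contributions as in \eqref{eq: rational Matsumoto} and \eqref{eq: Matsumoto equality} gives $\langle X^{\uparrow},Y,Z\rangle_{\Q} = t^2\,\langle [X],[Y],[Z]\rangle$, with signs arranged to match the conventions fixed in Lemma \ref{lem: Matsumoto}.

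For the vanishing I would first observe that $H_2(A;\Q) = 0$ forces $[X],[Y],[Z]$ to be torsion classes, so $[X_i]\cdot W = 0$ for every $W\in H_2(A)$; the indeterminacy ideal $I$ of Section \ref{sec: Matsumoto} is therefore trivial and $\langle [X],[Y],[Z]\rangle$ is a well-defined \emph{integer}. I would then pass to the ambient $S^4$: the spheres, Whitney arcs, and the surfaces $\Delta_{i,j}$ bounding the Whitney circles that compute the product in $A$ may equally be regarded as data in $S^4$, and since $H_2(S^4) = 0$ the Matsumoto product in $S^4$ also has zero indeterminacy, so the integer computed in $A$ equals the Matsumoto product of the images in $S^4$. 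Finally, because $\pi_2(S^4) = 0$, these images are null-homotopic, and the Matsumoto product of a null-homotopic class with anything is $0$ (represent it by a small sphere bounding a ball, disjoint from the others). Hence $\langle [X],[Y],[Z]\rangle = 0$.

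The main obstacle is the naturality step of the previous paragraph: one must verify that the \emph{same} geometric data legitimately computes the product in both $A$ and $S^4$ with their (separately trivial) indeterminacies, and that the sphere-resolutions of $X,Y,Z$ represent honest classes in $\pi_2$ to which Matsumoto's homotopy invariance applies. Once this is in place, combining the lemma with Lemma \ref{lem: triple linking Matsumoto} yields $\lambda_3([x],[y],[z]) = \tfrac1{t^3}\langle X^{\uparrow},Y,Z\rangle_{\Q} = \tfrac1t\langle [X],[Y],[Z]\rangle = 0$, which is Theorem \ref{thm: vanishing}.
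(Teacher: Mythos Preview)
Your overall shape is right, but the first equality has a real gap. The ``rational caps'' $\tfrac{1}{t}C^x_i$ are not surfaces, and the capped-surface formula of Remark~\ref{rem: Matsumoto} that you invoke requires honest integral caps in a \emph{simply-connected} $4$-manifold. The piece $A$ need not be simply-connected (its $H_1$ is in general nontrivial torsion), so the Matsumoto product in $A$ is not even defined in the sense of Section~\ref{sec: Matsumoto}, and your formal substitution has no a~priori meaning as that product. Your own stated worry about the ``naturality step'' from $A$ to $S^4$ is a symptom of this: you are trying to manufacture an intermediate quantity in $A$ that only really makes sense once you are already in $S^4$.

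The paper collapses your two steps into one by working in $S^4$ from the outset. Since $H_1(S^4)=0$, each curve $\gamma^x_i, \delta^x_i$ bounds an \emph{integral} surface $C'_i, D'_i$ in $S^4$. Then $C^x_i - tC'_i$ is a closed $2$-cycle in $S^4$, hence null-homologous because $H_2(S^4)=0$; since $Y,Z$ are cycles, this gives $C^x_i\cdot Y = t(C'_i\cdot Y)$ and likewise for every other factor in \eqref{eq: rational Matsumoto x}. Substituting into \eqref{eq: rational Matsumoto} yields exactly $t^2$ times the capped-surface expression \eqref{eq: Matsumoto equality} computed in $S^4$ with the integral caps $C'_i, D'_i$ --- which is the Matsumoto product in $S^4$, and is zero for the reason you give. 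So the fix is not to justify rational caps in $A$, but to observe that integral caps are available as soon as you pass to $S^4$, and that the null-homology argument there lets you swap $C^x_i$ for $tC'_i$ without changing any intersection number.
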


Here $\langle [X], [Y], [Z]\rangle$ is the Matsumoto pairing in $S^4$, which is trivial.

{\em Proof of Lemma \ref{lem: rational Matsumoto zero}.} The curves $\gamma^x_i, \delta^x_i$  bound surfaces $C'_i, D'_i$ in $S^4$. Then $t\gamma^x_i=t\partial C'_i, t\delta^x_i=t\partial D_i$.  
Since $Y, Z$ are $2$-cycles and $C_i^x-t\partial C'_i$, $D_i^x-t\partial D'_i$ are null-homologous, each factor on the right hand side of 
\eqref{eq: rational Matsumoto x} is unchanged when $C_i^x, D_i^x$ are replaced with $t\, C'_i, t\, D'_i$. Computing the quantity \eqref{eq: rational Matsumoto x} with $C'_i, D'_i$ gives the Matsumoto product (see Remark \ref{rem: Matsumoto}), and the above discussion gives the factor $t^2$.
\qed

\section{Symmetry and relation with the Matsumoto triple product} \label{sec: relation Matsumoto} 
We state the relation of $\lf$ with the Matsumoto product, which is implicit in the proof of Theorem \ref{thm: vanishing}.

\begin{proposition} \label{prop: l3 Matsumoto}
Given $x, y, z\in H_1(M)$, suppose $N$ is a simply-connected 4-manifold with $\partial N=M$, such that the three associated homology classes $[X], [Y], [Z]\in H_2(N)$ have trivial pairwise intersection numbers.
Then
\begin{equation} \label{eq: l3 M}
\lf([x],[y],[z])\, =\, \frac{1}{t}\langle [X], [Y], [Z]\rangle.
\end{equation}
\end{proposition}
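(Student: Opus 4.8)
The plan is to deduce Proposition \ref{prop: l3 Matsumoto} directly from the two lemmas established in the proof of Theorem \ref{thm: vanishing}, observing that those arguments never actually used the ambient $S^4$ beyond the vanishing of a single Matsumoto product. First I would reconstruct, verbatim, the setup of Lemma \ref{lem: triple linking Matsumoto}: for each of $x,y,z$ choose bounding surfaces $S_x,S_y,S_z$ in $N$ (using $H_1(N)=0$ to get $\partial S_x = x$, etc.) and bounding surfaces $\Sigma_x,\Sigma_y,\Sigma_z$ in $M$ with $\partial\Sigma_x = tx$, and form the closed surfaces $X=\Sigma_x - tS_x$, $Y=\Sigma_y-tS_y$, $Z=\Sigma_z-tS_z$ as in \eqref{eq: 2cycles}. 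After a push-off $\Sigma_x^{\uparrow}$ into the interior, these represent the homology classes $[X],[Y],[Z]\in H_2(N)$ that appear in the statement, and by hypothesis their pairwise intersection numbers vanish, so the rational Matsumoto pairing $\langle X^{\uparrow},Y,Z\rangle_{\Q}$ of \eqref{eq: rational Matsumoto} is defined.

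The key point is that Lemma \ref{lem: triple linking Matsumoto} is a purely local statement: its proof only manipulates surfaces in a collar of $\partial N = M$ and counts intersections ``deep in $N$'' as multiples of $t^3$, never invoking that $N=A\subset S^4$. Hence the identity
\begin{equation} \label{eq: step1}
\lf([x],[y],[z]) \, = \, \frac{1}{t^3}\,\langle X^{\uparrow}, Y, Z\rangle_{\Q}
\end{equation}
holds in $\Q/\Z$ for \emph{any} $N$ with $H_1(N)=0$. I would state this explicitly as the first step, taking care to note that the reduction of the ``deep'' intersections modulo $t^3$ used only the product structure of the $S$-surfaces in the collar, which is available regardless of the global topology of $N$.

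The second step mirrors Lemma \ref{lem: rational Matsumoto zero}, but without setting the answer to zero. Using $\pi_2(N)\cong H_2(N)$ and the simple-connectivity of $N$, I would represent $[X],[Y],[Z]$ by maps of surfaces $R_1,R_2,R_3$ and choose \emph{disks} as second-stage caps, exactly as in Remark \ref{rem: Matsumoto}(1). The symplectic-basis curves $\gamma^x_i,\delta^x_i$ then bound disks $C'_i,D'_i$ in $N$, and since $Y,Z$ are $2$-cycles while $C^x_i - t\,C'_i$ and $D^x_i - t\,D'_i$ are null-homologous, each factor in \eqref{eq: rational Matsumoto x} is unchanged upon replacing $C^x_i,D^x_i$ by $t\,C'_i,t\,D'_i$. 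Evaluating \eqref{eq: rational Matsumoto x} with the disks $C'_i,D'_i$ reproduces, via Lemma \ref{lem: Matsumoto} and Remark \ref{rem: Matsumoto}, the integer Matsumoto product $\langle [X],[Y],[Z]\rangle$, while the two factors of $t$ introduced contribute $t^2$. This yields
\begin{equation} \label{eq: step2}
\langle X^{\uparrow}, Y, Z\rangle_{\Q} \, = \, t^2\,\langle [X], [Y], [Z]\rangle.
\end{equation}
Combining \eqref{eq: step1} and \eqref{eq: step2} gives $\lf([x],[y],[z]) = \frac{1}{t}\langle [X],[Y],[Z]\rangle$, as claimed.

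The main obstacle I anticipate is bookkeeping rather than conceptual: one must confirm that \eqref{eq: l3 M} is asserted as an equality in $\Q/\Z$ (not in $\Q$), since the Matsumoto product itself is only well-defined modulo the indeterminacy ideal $I$ generated by the intersection numbers $[X]\cdot W$, $[Y]\cdot W$, $[Z]\cdot W$ for $W\in H_2(N)$. I would therefore check that dividing $\langle[X],[Y],[Z]\rangle \in \Z/I$ by $t$ lands in a well-defined coset of $\Q/\Z$, and that this indeterminacy is precisely what matches the $\Q/\Z$ (rather than $\Q$) ambiguity already present in $\lf$ by Theorem \ref{thm: well defined}. The second subtlety is verifying that the choice of bounding surfaces $S_x,\Sigma_x$ producing the classes $[X],[Y],[Z]$ is compatible with the hypothesis that these classes have vanishing pairwise intersection numbers; by Lemma \ref{lem: linking intersection} this vanishing is equivalent to $\lambda([x],[y])=\lambda([x],[z])=0$, so the hypotheses of the two sides genuinely align.
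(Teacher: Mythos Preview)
Your proposal is correct and follows essentially the same approach as the paper: the paper's proof of Proposition~\ref{prop: l3 Matsumoto} is the single sentence ``The proof is a consequence of the proofs of Lemmas \ref{lem: triple linking Matsumoto}, \ref{lem: rational Matsumoto zero},'' and you have spelled out in detail exactly how those two lemmas combine, including the observation that neither proof genuinely uses the ambient $S^4$. Your handling of the indeterminacy ideal also matches the paper's Remark immediately following the proposition.
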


The proof is a consequence of the proofs of Lemmas \ref{lem: triple linking Matsumoto},  \ref{lem: rational Matsumoto zero}.

{\em Remark.} The Matsumoto product $\langle X, Y, Z\rangle$ is well-defined in $\Z/I$ where $I$ is the ideal generated by intersection numbers of $X, Y, Z$ with other  classes in $H_2(N)$. Since $X, Y, Z$ are assembled of surfaces $\Sigma\subset M$ and $tS$ in the interior of $N$, it follows that the intersection of any class in $H_2(N)$ with $X, Y, Z$ is an element of $t\Z$. By Proposition \ref{prop: l3 Matsumoto}, the indeterminacy does not affect the relation with $\lf$ in \eqref{eq: l3 M}.

\begin{proposition} \label{prop: symmetry}
    Let $a_1,a_2,a_3\in H_1(M)$ with $\lambda(a_i,a_j)=0$ for all $i\neq j$ and consider a permutation $\sigma\in S_3$. Then
    \[ \lf(a_1, a_2, a_3)\, =\, {\rm sgn(\sigma)}\, \lf(a_{\sigma(1)}, a_{\sigma(2)}, a_{\sigma(3)}).\]
\end{proposition}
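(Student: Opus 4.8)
The plan is to deduce the $S_3$-antisymmetry of $\lf$ from the corresponding antisymmetry of the Matsumoto triple product, via the relation established in Proposition \ref{prop: l3 Matsumoto}. The first step is to realize $M$ as the boundary of a simply-connected $4$-manifold: since $\Omega_3^{SO}=0$, $M$ bounds a compact oriented $W$, and surgering embedded generators of $\pi_1(W)$ (which have trivial normal bundle in an oriented $4$-manifold) produces a simply-connected $N$ with $\partial N=M$. Fixing $N$, I would choose surfaces $S_{a_i}\subset N$ with $\partial S_{a_i}=a_i$ and $\Sigma_{a_i}\subset M$ with $\partial\Sigma_{a_i}=ta_i$, and form the closed surfaces $X_i=\Sigma_{a_i}-tS_{a_i}$ as in \eqref{eq: 2cycles}. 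By Lemma \ref{lem: linking intersection}, $[X_i]\cdot[X_j]=-t^2\lambda(a_i,a_j)=0$ for all $i\neq j$, using the hypothesis that every pairwise linking vanishes; hence Proposition \ref{prop: l3 Matsumoto} applies and gives
\[
\lf(a_{\sigma(1)},a_{\sigma(2)},a_{\sigma(3)})=\frac{1}{t}\,\langle [X_{\sigma(1)}],[X_{\sigma(2)}],[X_{\sigma(3)}]\rangle
\]
for every $\sigma\in S_3$. All three reorderings of $\lf$ are defined precisely because every pairwise linking is zero, and crucially the classes $[X_i]$ are fixed once $N$ and the representatives are chosen, so only their order changes with $\sigma$.

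The second, and main, step is to show that the Matsumoto product is antisymmetric: $\langle [X_{\sigma(1)}],[X_{\sigma(2)}],[X_{\sigma(3)}]\rangle=\operatorname{sgn}(\sigma)\,\langle[X_1],[X_2],[X_3]\rangle$. Cyclic invariance is immediate from the cyclic shape of the defining sum \eqref{eq: Matsumoto}, namely $S_1\cdot\Delta_{2,3}+S_2\cdot\Delta_{3,1}+S_3\cdot\Delta_{1,2}$, and cyclic permutations are even. For a transposition I would exploit the orientation convention on Whitney arcs recalled in Section \ref{sec: Matsumoto}: reversing the order of a pair $(i,j)$ interchanges the roles ``positive-to-negative'' and ``negative-to-positive'', reversing the orientation of the Whitney circle and hence of its bounding surface, so that $\Delta_{j,i}=-\Delta_{i,j}$. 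Substituting this into \eqref{eq: Matsumoto} yields $\langle X_2,X_1,X_3\rangle=-\langle X_1,X_2,X_3\rangle$, and likewise for the other transpositions. Since transpositions generate $S_3$ and the sign is multiplicative, the product is $\operatorname{sgn}$-equivariant, and combining with the display above gives $\lf(a_{\sigma(1)},a_{\sigma(2)},a_{\sigma(3)})=\operatorname{sgn}(\sigma)\,\lf(a_1,a_2,a_3)$.

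I expect the antisymmetry of the Matsumoto product to be the point requiring care, specifically the bookkeeping of Whitney-arc orientations and checking that the sign $\Delta_{j,i}=-\Delta_{i,j}$ is consistent across all three pairs simultaneously. One must also confirm that the indeterminacy ideal $I$ of the Matsumoto product is symmetric in the three classes, so that the antisymmetry is order-independent, and that—as noted in the remark following Proposition \ref{prop: l3 Matsumoto}—$I\subseteq t\Z$ ensures the resulting equality descends to $\Q/\Z$. A useful sanity check is that the transposition $(2\,3)$, i.e. swapping $y$ and $z$, already changes the sign at the level of the defining formula \eqref{eq: triple linking definition}, which is visibly antisymmetric in its last two arguments; this agrees with the Matsumoto computation and serves to verify the orientation conventions.
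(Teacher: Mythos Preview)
Your overall strategy---reduce to Proposition~\ref{prop: l3 Matsumoto} and invoke the antisymmetry of the Matsumoto product---is exactly the paper's approach. However, there is a genuine gap in your verification of the hypothesis of Proposition~\ref{prop: l3 Matsumoto}.

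You write ``By Lemma~\ref{lem: linking intersection}, $[X_i]\cdot[X_j]=-t^2\lambda(a_i,a_j)=0$.'' This misreads the lemma: equation~\eqref{eq: linking intersection} is an identity in $\Q/\Z$, not in $\Q$. Indeed, the proof of Lemma~\ref{lem: linking intersection} explicitly discards the contribution $t^2(S_x\cdot S_y)$ as ``$0\in\Q/\Z$''. Hence $\lambda(a_i,a_j)=0$ only yields $[X_i]\cdot[X_j]\in t^2\Z$, and for an \emph{arbitrary} simply-connected filling $N$ (obtained, say, by surgering a generic null-cobordism) there is no reason for the bounding surfaces $S_{a_i}$ to have trivial pairwise intersection numbers. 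With $[X_i]\cdot[X_j]$ possibly nonzero, the Matsumoto product $\langle[X_1],[X_2],[X_3]\rangle$ is not even defined, and Proposition~\ref{prop: l3 Matsumoto} does not apply.

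The paper closes this gap by \emph{constructing} a particular $N$: choose representatives $x,y,z$ satisfying~\eqref{eq: intersection zero} (and its cyclic permutations), attach $2$-handles to $M\times I$ along $x,y,z$ in $M\times\{1\}$, and cap off with any simply-connected $4$-manifold. In this $N$ the curves $x,y,z$ bound the disjoint cores of the $2$-handles, so one may take $S_x,S_y,S_z$ pairwise disjoint; combined with $\Sigma_x\cdot y=\Sigma_x\cdot z=0$ (and cyclically), the computation in Lemma~\ref{lem: linking intersection} now gives $[X_i]\cdot[X_j]=0$ on the nose. Once this is in place, your second step (cyclic invariance from the shape of~\eqref{eq: Matsumoto}, sign change under a transposition via the orientation convention on Whitney circles) is correct and matches the paper's one-line appeal to the symmetry of the Matsumoto product.
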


\begin{proof}
First we claim that given $M$ and $a_1,a_2,a_3\in H_1(M)$ as in the statement, there is a simply-connected 4-manifold $N$, $\partial N=M$ so that the classes $A_i\in H_2(N)$ corresponding to $a_i$ satisfy $A_i\cdot A_j=0$ for all $i\neq j\in\{1, 2, 3\}$.

As in \eqref{eq: trivial linking}, using the assumption $\lambda(a_i,a_j)=0$ we find curves $x, y, z$ representing the homology classes $a_1, a_2, a_3$ such that $\Sigma_x\cdot y = \Sigma_x\cdot z =0$, with the analogous condition where $x,y,z$ are cyclically permuted.
Now attach $2$-handles to $M\times I$ with any framing to the curves $x,y,z$ in $M\times \{ 1\}$, and then fill in the resulting $3$-manifold with a simply-connected $4$-manifold.

Recall from the proof of Lemma \ref{lem: linking intersection}  that there are three contributions to the intersection number. Following the notation of that lemma, $\Sigma_y\cap X^{\uparrow}=0$. The intersections deep in $N$  are zero by construction of the 4-manifold $N$ because the curves $x,y,z$ bound disjoint disks (cores of the 2-handles). The last contribution equals $\Sigma_x\cdot y=0$. 

Now the result follows from Proposition \ref{prop: l3 Matsumoto} since the definition \eqref{eq: Matsumoto} of the Matsumoto product has the required symmetry with respect to permutations.
\end{proof}

\section{An example} \label{sec: examples}
The lens space $L(3,1)$ does not embed in an integer homology 4-sphere (this follows for example from Hantzsche's theorem) but a punctured $L(3,1)$ embeds in $S^4$ \cite{Zeeman}. It follows that $L(3,1)\# -L(3,1)$ embeds in $S^4$. A Kirby diagram for this connected sum is given by a two component unlink with the framing of one component equal to $3$ and the other one $-3$. 

\subsection{A preliminary construction} \label{sec: preliminary}
Consider the framed link in Figure \ref{fig:Example0} 
\begin{figure}[h]
\includegraphics[width=9cm]{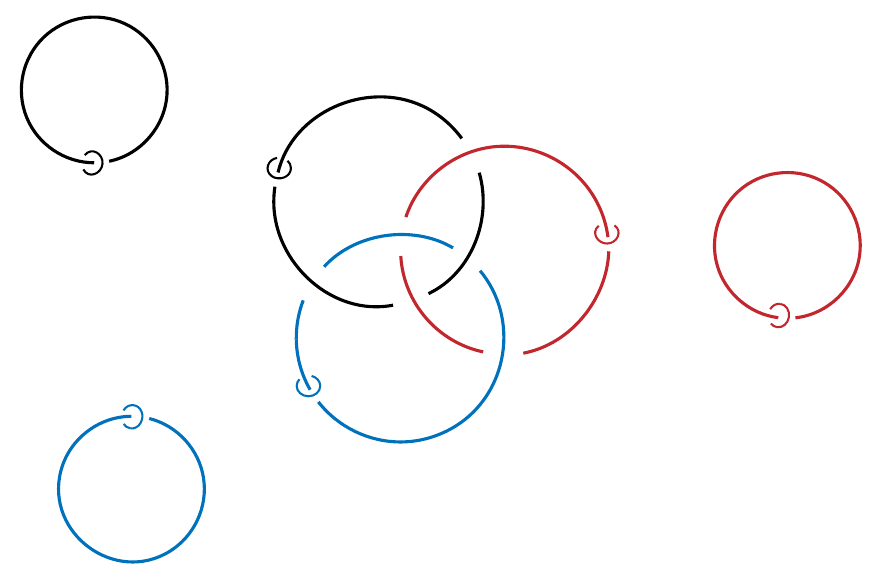}
\small
\put(-255,164){$-3$}
\put(-259,23){$-3$}
\put(-34,120){$-3$}
\put(-150,143){$3$}
\put(-108,128){$3$}
\put(-145,27){$3$}
\put(-233,108){$x_2$}
\put(-192,120){$x_1$}
\put(-36,63){$y_2$}
\put(-72,100){$y_1$}
\put(-222,55){$z_2$}
\put(-182,52){$z_1$}
\caption{A Kirby diagram for the manifold $M_{0}$.}
\label{fig:Example0}
\end{figure}
with three $3$-framed curves forming the Borromean rings and three $(-3)$-framed unlinked circles. 
This framed link defines a 3-manifold $M_{0}$ whose torsion linking pairing is isomorphic to that of a connected sum of three copies of $L(3,1)\# -L(3,1)$. It follows that Hantzsche's theorem does not give an obstruction to embeddability of $M_{0}$. Next we analyze the triple torsion linking pairing on the Lagrangians in the first homology of this manifold.

The first homology of $M_0$ is $(\Z/3\Z)^6$, generated by meridians to the link components. Every element is a torsion element of order 3, and we will fix $t=3$ throughout this section. (Compare with Remark \ref{rem: lf} (2).)

The left part of Figure \ref{fig:Bor} gives a more detailed illustration of the surgery on the Borromean rings. The figure shows three solid tori, a regular neighborhood of the Borromean rings, that are removed from the 3-sphere, and whose boundary tori are filled in with slope 3. Let $l_{x_1}, l_{y_1}, l_{z_1}$ denote longitudes of the tori; meridional circles linking the tori are denoted in Figures \ref{fig:Example0}, \ref{fig:Bor} by $x_1, y_1, z_1$. Consider the orientations of the longitudes of the link components in Figure \ref{fig:Bor} and the induced orientations of the meridians using the right hand rule.

\begin{figure}[ht]
\includegraphics[width=15cm]{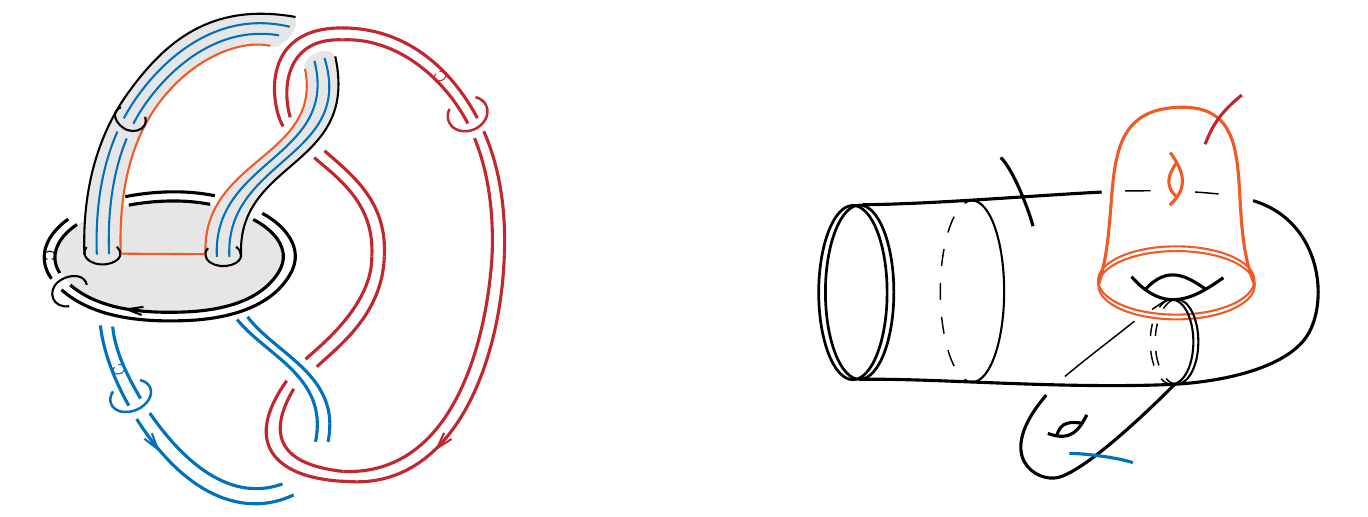}
\small
\put(-419, 59){$x_1$}
\put(-401,124){$\gamma$}
\put(-368, 130){$\delta$}
\put(-380, 69){$l_{x_1}$}
\put(-375,87){$\Sigma$}
\put(-401,28){$y_1$}
\put(-380,6){$l_{y_1}$}
\put(-271, 123){${z_1}$}
\put(-298, 151) {$l_{z_1}$}
\put(-120,118){$x_1$}
\put(-165,31){$3x_1$}
\put(-126,30){$l_{x_1}$}
\put(-71,12){$y_1$}
\put(-49,49){$3\gamma$}
\put(-32,71){$3\delta$}
\put(-15,45){$\Sigma$}
\put(-36,132){$z_1$}
\caption{A rational grope bounded by $3x_1$.}
\label{fig:Bor}
\end{figure}

Let the filling solid tori be called $T_{x_1}, T_{y_1}, T_{z_1}$.
The curves $3x_1$ and $l_{x_1}$ cobound an annulus in $T_{x_1}$, and $l_{x_1}$ bounds a genus one surface $\Sigma$ in the complement of the other two solid tori, shown in the figure. A symplectic basis of curves on $\Sigma$ is given by curves $\gamma, \delta$ which are isotopic to $z_1, y_1$ respectively in the complement of the solid tori.

Analogously, $3x_2$ and the longitude $l_{x_2}$ of the corresponding unlinked component cobound an annulus in the filling solid torus $T_{x_2}$. Moreover, $l_{x_2}$ bounds a disk disjoint from everything else except for a single intersection point with $x_2$. 
The surface $\Sigma$ also intersects $x_1$ in a single point but with the opposite sign. 

\subsection{Lagrangians in $M_0$}
Consider the homology classes in $H_1(M_{\rm Bor})$ of the 1-cycles
\begin{equation} \label{eq: Lagrangian}
x:=x_1+x_2, \, y:=y_1+y_2, \, z:=z_1+z_2.
\end{equation}
By the discussion above, these three classes form a Lagrangian with respect to the torsion linking pairing. We will next show that $\lf([x],[y],[z])\neq 0\in \Q/Z$.

The curves $3\delta$ and $l_{z_1}$ cobound an annulus which goes over the solid torus $T_{z_1}$, and $l_{z_1}$ bounds a genus one surface in the complement of the other components of the Borromean rings. This is a second stage surface, intersecting $z$ in a single point, for the rational height 2 grope bounded by $3x$. The other second stage surface, intersecting $y$ in a single point, is bounded by $3\gamma$. (The second stage surfaces intersect $\Sigma$ and also each other; these intersections do not contribute to the calculation of $\lf$ and are not indicated on the right in Figure \ref{fig:Bor}.)

The formula \eqref{eq: triple linking definition} for computing $\lf([x],[y],[z])$  has a factor $\frac{1}{3}$ and there is a single non-trivial term contributing $\pm 1$, showing $\lf([x],[y],[z])$ is non-trivial. The sign is not important here, nevertheless checking the chosen orientations of the link components and their meridians shows that the sign is +1.

The above calculation shows that the triple linking form is non-trivial on the Lagrangian spanned by the elements \eqref{eq: Lagrangian}.
More generally, given any Lagrangian $L\subset H_1(M_0)$, consider a basis $l_1, l_2, l_3$ of $L$ which we will view as rows of a $(3\times 6)$-matrix $A_L$\footnote{Strictly speaking, the matrix depends on a choice of a basis of $L$. However, we omit the basis from the notation $A_L$ because the purpose of these matrices is to test the non-triviality of $\lambda_3$, a property that is basis-independent.}. The first three columns of this matrix correspond to the basis elements $x_1, y_1, z_1$ of $H_1(M_0)$, and the last three to $x_2, y_2, z_2$. For example, the basis of the Lagrangian in \eqref{eq: Lagrangian} corresponds to the $(3\times 6)$-matrix with two identity $3\times 3$ blocks.  Recall that based on the framing of the link in Figure \ref{fig:Example0}, the torsion linking and self-linking $\lambda(l_i, l_j)$ of the rows of the matrix $A_L$ is computed using the $\Q/\Z$-valued bilinear form with diagonal entries $\frac{1}{3}, \frac{1}{3}, \frac{1}{3}, -\frac{1}{3}, -\frac{1}{3}, -\frac{1}{3}$. 

It follows from the anti-symmetry of $\lf$ (Proposition \ref{prop: symmetry}) and from the calculation above (corresponding to the fact that the components with meridians $x_1, y_1, z_1$ form the Borromean rings) that $\lf(l_1, l_2, l_3)$ is a 3-form equal to the determinant of the left $3\times 3$ block of the matrix $A_L$. 

The following two matrices give an example of a dual pair of Lagrangians $L, L'$ with $\lf$ vanishing on both. 

\begin{equation} \label{eq: dual vanishing}
A_L:\; \begin{bmatrix}
  {0} &  {0} &  {0} &  {1} &  {1}  & {1}\\
 {0} &  {1} &  {-1} &  {0} &  {1}  & {-1}\\ 
 {1} &  {1} &  {1} &  {0} &  {0} & {0} \\ 
\end{bmatrix} \hspace{1.5cm} A_{L'}: \; \begin{bmatrix}
  {0} &  {0} &  {0} &  {-1} &  {1}  & {1}\\
 {0} &  {1} &  {-1} &  {0} &  {-1}  & {1}\\ 
 {-1} &  {1} &  {1} &  {0} &  {0} & {0} \\ 
\end{bmatrix}
\end{equation}

It follows that $\lf$ does not obstruct embeddability of $M_0$ into $S^4$ with this pair of dual Lagrangians. Next we construct an example of a rational homology sphere such that $\lf$ is non-trivial on one of the Lagrangians in {\em any} dual pair.

\subsection{A non-embeddable example} \label{sec: nonembeddable}

There are 48 Lagrangian subspaces $L\subset H_1(M_0)$ with a non-singular left $3\times 3$ block in the matrix $A_L$, and thus a non-vanishing $\lf$. However, there are other Lagrangians (to be precise, a total of 32) for which the determininant of this block is trivial in $\Z/3\Z$, see for example the pair $L, L'$ in \eqref{eq: dual vanishing}. A computer calculation shows that the Lagrangian subspaces of $H_1(M_0)$ form 1080 dual pairs.

Consider the following collection of links parametrized by vectors $v\in (\Z/3\Z)^{20}$. 
As in the beginning of Section \ref{sec: examples}, we start with the 6 component unlink, where half of the components have framing 3 and the other half $-3$. Rather than forming the Borromean rings just with the 3-framed components, now consider all possible triples of curves.
There are $\binom{6}{3}=20$ such choices, which we will also view as  triples of columns of the  $3\times 6$ matrices $A_L$. Consider the lexicographic order on these triples, so the first index corresponds to the leftmost three columns, the second index corresponds to the columns $1, 2, 4$, etc. 

Given $v\in (\Z/3\Z)^{20}$ and starting with an oriented 6 component unlink,
for each $i=1,\ldots, 20$ consider the $i$-th component $v_i$ of $v$. If $v_i=1$, perform a clasper surgery on the corresponding triple of components, or in other words take a band sum of the three components corresponding to the index $i$ with a separate, split copy of the Borromean rings oriented as in Figure \ref{fig:Bor}. If $v_i=-1$ then perform the same band sum, except that one of the bands is half-twisted (orientation-reversing). If $v_i=0$ then the corresponding three components are left intact.

Let $M_v$ denote the Dehn surgery on the resulting link. 
There are $3^{20}\approx 3.5$ billion links (and corresponding 3-manifolds) in this collection.  Figure \ref{fig:Example0} showed one possible link, corresponding to the vector with first component equal to 1 and the remaining 19 components equal to zero.

Note that for any $v$, the homology $H_1(M_v)$ and the torsion linking pairing are exactly the same as for the connected sum of three copies of $L(3,1)\# -L(3,1)$: adding the Borromean rings to the unlink does not affect $\lambda$. Accordingly, the Lagrangians in $H_1(M_v)=(\Z/3\Z)^6$ are also independent of $v$.
The calculation in Section \ref{sec: preliminary} implies that the triple linking form on $M_v$, evaluated on a basis of any Lagrangian $L$, equals the linear combination of the determinants of all $3\times 3$ sub-matrices of the matrix $A_L$, with coefficients given by the components of the vector $v$.

A computer calculation\footnote{A SageMath program is available at https://github.com/vkrushkal/Dual-pairs-of-Lagrangians/}
shows that the coefficient vector \[
v:=[-1, -1, 1, 1, 0, 0, 0, 0, 0, 0, -1, -1, -1, 1, 1, 0, 0, 0, 0, 0]
\]
has the property that the linear combination of $3\times 3$ determinants corresponding to the components of $v$ is non-trivial on at least one Lagrangian in each dual pair. It follows that the same conclusion holds for the triple linking form $\lambda_3$ on the 3-manifold $M_v$. We denote the rational homology sphere $M_v$ for this choice of $v$ by $M_{\rm Bor}$.

An application of Theorem \ref{thm: vanishing} concludes the proof of the following result, relying on the non-vanishing of the triple linking pairing.

\begin{lemma} \label{lem: example}
The 3-manifold $M_{\rm Bor}$ does not embed in an integer homology 4-sphere.  
\end{lemma}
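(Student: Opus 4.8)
The plan is to reduce the non-embeddability statement entirely to the triple-linking obstruction developed in Theorem \ref{thm: vanishing}, and to verify that the computer calculation certifies the required non-vanishing across all dual pairs of Lagrangians. The strategy has two logical halves: first, establish that \emph{any} hypothetical embedding of $M_{\rm Bor}$ into an integer homology $4$-sphere produces a pair of dual Lagrangians $A,B\subset H_1(M_{\rm Bor})$ on which $\lambda_3$ must vanish; second, show that no dual pair of Lagrangians in $H_1(M_{\rm Bor})=(\Z/3\Z)^6$ can simultaneously have $\lambda_3$ vanish on both members, thereby producing a contradiction.

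For the first half, I would argue by contradiction: suppose $M_{\rm Bor}$ embeds in an integer homology $4$-sphere $\Sigma^4$. By the Remark following Theorem \ref{thm: vanishing}, both Hantzsche's theorem and Theorem \ref{thm: vanishing} apply with $\Sigma^4$ in place of $S^4$. Hantzsche's theorem then supplies a splitting $H_1(M_{\rm Bor})\cong A\oplus B$ into dual Lagrangians (the kernels of the inclusion-induced maps into the two complementary pieces), and Theorem \ref{thm: vanishing} forces $\lambda_3$ to vanish identically on both $A$ and $B$. Here I would emphasize that the Lagrangian structure on $H_1(M_v)$ is \emph{independent of $v$}, as noted in the construction of $M_v$: adding the Borromean rings or clasper surgeries does not change $H_1$ or the torsion linking pairing $\lambda$, so the collection of dual pairs of Lagrangians is exactly the one analyzed for the connected sum of three copies of $L(3,1)\#-L(3,1)$.

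For the second half, I would invoke the explicit formula for $\lambda_3$ on $M_v$ established just before the statement: evaluated on a basis of any Lagrangian $L$, the triple linking form equals the linear combination, with coefficients given by the components of $v$, of the determinants of all twenty $3\times 3$ sub-matrices of the matrix $A_L$. This reduces the topological vanishing condition to a purely combinatorial-algebraic condition on the finite set of Lagrangians, which is where the specific vector $v=[-1,-1,1,1,0,0,0,0,0,0,-1,-1,-1,1,1,0,0,0,0,0]$ enters. The computer calculation referenced in the github repository verifies that for this $v$, the associated linear combination of $3\times 3$ determinants is non-zero on at least one of the two Lagrangians in \emph{every} dual pair; since there are only finitely many (1080) dual pairs of Lagrangians, this is a finite check. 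Consequently, for any dual pair $(A,B)$ that could arise from an embedding, $\lambda_3$ is non-zero on $A$ or on $B$, directly contradicting Theorem \ref{thm: vanishing}.

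The main obstacle I anticipate is not in the logical scaffolding, which is a clean contradiction argument, but in certifying that the formula relating $\lambda_3$ to $3\times 3$ determinants is correctly and fully justified for \emph{arbitrary} Lagrangians and \emph{arbitrary} $v$, not merely for the single preliminary example of Section \ref{sec: preliminary}. Specifically, one must confirm that each clasper surgery on a triple of components contributes the determinant of the corresponding three columns of $A_L$ (with the appropriate sign recorded by $v_i\in\{-1,0,1\}$), and that these contributions add linearly with no cross terms. I would handle this by appealing to the additivity of the grope construction underlying \eqref{eq: triple linking definition} together with the antisymmetry of $\lambda_3$ from Proposition \ref{prop: symmetry}, which is exactly what forces the contribution of each Borromean triple to be an alternating $3$-form, hence a determinant. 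Once that formula is in hand, the remainder is a finite verification delegated to the SageMath computation, and the application of Theorem \ref{thm: vanishing} completes the proof.
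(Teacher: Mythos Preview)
Your proposal is correct and matches the paper's approach: the paper's proof is precisely the contradiction argument you outline, invoking the Remark after Theorem~\ref{thm: vanishing} to handle integer homology $4$-spheres, using the determinant formula for $\lambda_3$ on $M_v$ (established just before the lemma), and citing the computer check that the chosen $v$ makes $\lambda_3$ non-vanish on at least one Lagrangian in every dual pair. Your anticipated ``obstacle'' about justifying the determinant formula for arbitrary $v$ and $L$ is not a gap in your proof of the lemma itself, since the paper treats that formula as already established in the preceding paragraphs of Section~\ref{sec: examples}.
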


{\em Remark.} To emphasize the subtlety of the above analysis we note that a computer calculation shows that for any vector $v\in(\Z/3\Z)^{20}$, the corresponding linear combination of determinants vanishes on some Lagrangian. In other words, for any $v$, the triple linking form on the 3-manifold $M_v$ is trivial on some Lagrangian in $H_1(M_v)$. A weaker condition of non-vanishing on at least one Lagrangian in each dual pair sufficed for our application to non-embeddability in $S^4$.

\section{Questions} \label{sec: questions}
We conclude by stating some questions motivated by our work.
\subsection{} \label{lf Heegaard}
It is shown in \cite[Theorem 4]{Freedman} that the condition on the linking form in Hantzsche's theorem is equivalent to the existence of an embedding of a Heegaard surface for $M^3$ in $S^3$ so that each of the two geometric Lagrangians defining the Heegaard splitting has trivial linking and self-linking numbers.  
It is a natural question whether there is an analogous characterization of the vanishing of all triple torsion linking invariants in terms of Milnor's invariants $\bar\mu_{1,2,3}$ for Heegaard diagrams.

\subsection{}
We expect that there is an obstruction theory of torsion linking forms of order $n$ for $n>3$ as well. The rational homology $3$-spheres in Section \ref{sec: examples} admit an immediate generalization to Bing doubles of the Borromean rings, providing examples whose embeddability in $S^4$ may be obstructed by such higher order invariants.

\subsection{} In the introduction we mentioned several higher order invariants, and in Section \ref{sec: relation Matsumoto} we showed that $\lf$ is related to one of them, the Matsumoto triple product. 
Note that for rational homology 3-spheres, $H^1(M;\Z)=0$ and so all Massey products vanish for integral cohomology. We do not know if our invariant is related to Massey products for cohomology with coefficients in $\Z/p\Z$. 

\vfill

\end{document}